\newtheorem{theorem}{Theorem}[section]
\newtheorem{prop}[theorem]{Proposition}
\newtheorem{defn}[theorem]{Definition}
\newtheorem{lemma}[theorem]{Lemma}
\newtheorem{coro}[theorem]{Corollary}
\newtheorem{prop-def}{Proposition-Definition}[section]
\newtheorem{coro-def}{Corollary-Definition}[section]
\newtheorem{exam}{Example}[theorem]
\newcommand{\nc}{\newcommand}
\nc{\tred}[1]{\textcolor{red}{#1}}
\nc{\tblue}[1]{\textcolor{blue}{#1}}
\nc{\tgreen}[1]{\textcolor{green}{#1}}
\nc{\tpurple}[1]{\textcolor{purple}{#1}}
\nc{\btred}[1]{\textcolor{red}{\bf #1}}
\nc{\btblue}[1]{\textcolor{blue}{\bf #1}}
\nc{\btgreen}[1]{\textcolor{green}{\bf #1}}
\nc{\btpurple}[1]{\textcolor{purple}{\bf #1}}
\renewcommand{\Bbb}{\mathbb}
\renewcommand{\frak}{\mathfrak}
\newcommand{\efootnote}[1]{}
\renewcommand{\textbf}[1]{}
\newcommand{\delete}[1]{}
\nc{\mlabel}[1]{\label{#1}}  
\nc{\mcite}[1]{\cite{#1}}  
\nc{\mref}[1]{\ref{#1}}  
\nc{\mbibitem}[1]{\bibitem{#1}} 
\nc{\mlabel}[1]{\label{#1}  
{\hfill \hspace{1cm}{\bf{{\ }\hfill(#1)}}}}
\nc{\mcite}[1]{\cite{#1}{{\bf{{\ }(#1)}}}}  
\nc{\mref}[1]{\ref{#1}{{\bf{{\ }(#1)}}}}  
\nc{\mbibitem}[1]{\bibitem[\bf #1]{#1}} 
\nc{\mkeep}[1]{\marginpar{{\bf #1}}} 
\nc{\rb}{\mathrm{RB}}
\nc{\mapmonoid}{\frakM}
\nc{\disjoint}{\frakM'}
\nc{\ncpoly}[1]{\langle #1\rangle}  
\nc{\mapm}[1]{\{{#1}\}}             
\nc{\disj}[1]{\{{#1}\}'}
\nc{\mdisj}[1]{\frakM'(#1)}
\nc{\brho}{\bar{\rho}}
\nc{\om}{\bar{\frakm}}
\nc{\frakn}{\mathfrak n}
\nc{\ddeg}[1]{^{(#1)}}
\nc{\rbot}{\stackrel{RB}{\otimes}}
\nc{\CRB}{\mathfrak{CRB}}
\nc{\Comm}{\mathfrak{Comm}}
\nc{\bin}[2]{ (_{\stackrel{\scs{#1}}{\scs{#2}}})}  
\nc{\binc}[2]{ \left (\!\! \begin{array}{c} \scs{#1}\\
    \scs{#2} \end{array}\!\! \right )}  
\nc{\bincc}[2]{  \left ( {\scs{#1} \atop
    \vspace{-1cm}\scs{#2}} \right )}  
\nc{\bs}{\bar{S}}
\nc{\cosum}{\sqsubset}
\nc{\diff}[1]{\langle #1 \rangle}
\nc{\la}{\longrightarrow}
\nc{\rar}{\rightarrow}
\nc{\dar}{\downarrow}
\nc{\dprod}{**}
\nc{\dap}[1]{\downarrow \rlap{$\scriptstyle{#1}$}}
\nc{\md}{\mathrm{dth}}
\nc{\uap}[1]{\uparrow \rlap{$\scriptstyle{#1}$}}
\nc{\defeq}{\stackrel{\rm def}{=}}
\nc{\disp}[1]{\displaystyle{#1}}
\nc{\dotcup}{\ \displaystyle{\bigcup^\bullet}\ }
\nc{\gzeta}{\bar{\zeta}}
\nc{\hcm}{\ \hat{,}\ }
\nc{\hts}{\hat{\otimes}}
\nc{\barot}{{\otimes}}
\nc{\free}[1]{\bar{#1}}
\nc{\uni}[1]{\tilde{#1}}
\nc{\hcirc}{\hat{\circ}}
\nc{\leng}{\ell}
\nc{\lleft}{[}
\nc{\lright}{]}
\nc{\lc}{\lfloor}
\nc{\rc}{\rfloor}
\nc{\curlyl}{\left \{ \begin{array}{c} {} \\ {} \end{array}
    \right .  \!\!\!\!\!\!\!}
\nc{\curlyr}{ \!\!\!\!\!\!\!
    \left . \begin{array}{c} {} \\ {} \end{array}
    \right \} }
\nc{\longmid}{\left | \begin{array}{c} {} \\ {} \end{array}
    \right . \!\!\!\!\!\!\!}
\nc{\onetree}{\bullet}
\nc{\ora}[1]{\stackrel{#1}{\rar}}
\nc{\ola}[1]{\stackrel{#1}{\la}}
\nc{\ot}{\otimes}
\nc{\mot}{{{\boxtimes\,}}}
\nc{\otm}{\overline{\boxtimes}}
\nc{\sprod}{\bullet}
\nc{\scs}[1]{\scriptstyle{#1}}
\nc{\mrm}[1]{{\rm #1}}
\nc{\msum}{\sum\limits}
\nc{\margin}[1]{\marginpar{\rm #1}}   
\nc{\dirlim}{\displaystyle{\lim_{\longrightarrow}}\,}
\nc{\invlim}{\displaystyle{\lim_{\longleftarrow}}\,}
\nc{\mvp}{\vspace{0.3cm}}
\nc{\tk}{^{(k)}}
\nc{\tp}{^\prime}
\nc{\ttp}{^{\prime\prime}}
\nc{\svp}{\vspace{2cm}}
\nc{\vp}{\vspace{8cm}}
\nc{\proofbegin}{\noindent{\bf Proof: }}
\nc{\proofend}{$\blacksquare$ \vspace{0.3cm}}
\nc{\modg}[1]{\!<\!\!{#1}\!\!>}
\nc{\intg}[1]{F_C(#1)}
\nc{\lmodg}{\!<\!\!}
\nc{\rmodg}{\!\!>\!}
\nc{\cpi}{\widehat{\Pi}}
\nc{\sha}{{\mbox{\cyr X}}}  
\nc{\ssha}{{\mbox{\cyrs X}}}
\nc{\shap}{{\mbox{\cyrs X}}} 
\nc{\shpr}{\diamond}    
\nc{\shp}{\ast}
\nc{\shplus}{\shpr^+}
\nc{\shprc}{\shpr_c}    
\nc{\msh}{\ast}
\nc{\zprod}{m_0}
\nc{\oprod}{m_1}
\nc{\vep}{\varepsilon}
\nc{\labs}{\mid\!}
\nc{\rabs}{\!\mid}
\nc{\dth}{d}
\nc{\mmbox}[1]{\mbox{\ #1\ }}
\nc{\fp}{\mrm{FP}} \nc{\rchar}{\mrm{char}} \nc{\Fil}{\mrm{Fil}}
\nc{\Mor}{Mor\xspace}
\nc{\gmzvs}{gMZV\xspace}
\nc{\gmzv}{gMZV\xspace}
\nc{\mzv}{MZV\xspace}
\nc{\mzvs}{MZVs\xspace}
\nc{\Hom}{\mrm{Hom}} \nc{\id}{\mrm{id}} \nc{\im}{\mrm{im}}
\nc{\incl}{\mrm{incl}} \nc{\map}{\mrm{Map}} \nc{\mchar}{\rm char}
\nc{\nz}{\rm NZ} \nc{\supp}{\mathrm Supp}
\nc{\Alg}{\mathbf{Alg}}
\nc{\Bax}{\mathbf{Bax}}
\nc{\bff}{\mathbf f}
\nc{\bfk}{{\bf k}}
\nc{\bfone}{{\bf 1}}
\nc{\bfx}{\mathbf x}
\nc{\bfy}{\mathbf y}
\nc{\base}[1]{\bfone^{\otimes ({#1}+1)}} 
\nc{\Cat}{\mathbf{Cat}}
\nc{\detail}{\marginpar{\bf More detail}
    \noindent{\bf Need more detail!}
    \svp}
\nc{\Int}{\mathbf{Int}}
\nc{\Mon}{\mathbf{Mon}}
\nc{\rbtm}{{shuffle }}
\nc{\rbto}{{Rota-Baxter }}
\nc{\remarks}{\noindent{\bf Remarks: }}
\nc{\Rings}{\mathbf{Rings}}
\nc{\Sets}{\mathbf{Sets}}
\nc{\BA}{{\Bbb A}} \nc{\CC}{{\Bbb C}} \nc{\DD}{{\Bbb D}}
\nc{\EE}{{\Bbb E}} \nc{\FF}{{\Bbb F}} \nc{\GG}{{\Bbb G}}
\nc{\HH}{{\Bbb H}} \nc{\LL}{{\Bbb L}} \nc{\NN}{{\Bbb N}}
\nc{\KK}{{\Bbb K}} \nc{\QQ}{{\Bbb Q}} \nc{\RR}{{\Bbb R}}
\nc{\TT}{{\Bbb T}} \nc{\VV}{{\Bbb V}} \nc{\ZZ}{{\Bbb Z}}
\nc{\cala}{{\mathcal A}} \nc{\calc}{{\mathcal C}}
\nc{\cald}{{\mathcal D}} \nc{\cale}{{\mathcal E}}
\nc{\calf}{{\mathcal F}} \nc{\calg}{{\mathcal G}}
\nc{\calh}{{\mathcal H}} \nc{\cali}{{\mathcal I}}
\nc{\call}{{\mathcal L}} \nc{\calm}{{\mathcal M}}
\nc{\caln}{{\mathcal N}} \nc{\calo}{{\mathcal O}}
\nc{\calp}{{\mathcal P}} \nc{\calr}{{\mathcal R}}
\nc{\cals}{{\mathcal S}}
\nc{\calt}{{\mathcal T}} \nc{\calw}{{\mathcal W}}
\nc{\calk}{{\mathcal K}} \nc{\calx}{{\mathcal X}}
\nc{\CA}{\mathcal{A}}
\nc{\fraka}{{\frak a}}
\nc{\frakA}{{\frak A}}
\nc{\frakb}{{\frak b}}
\nc{\frakB}{{\frak B}}
\nc{\frakH}{{\frak H}}
\nc{\frakM}{{\frak M}}
\nc{\bfrakM}{\overline{\frakM}}
\nc{\frakm}{{\frak m}}
\nc{\frakP}{{\frak P}}
\nc{\frakN}{{\mathfrak N}}
\nc{\frakp}{{\frak p}}
\nc{\frakr}{{\frak r}}
\nc{\frakS}{{\frak S}}
\nc{\frakx}{{\frak x}}
\nc{\ox}{\bar{\frakx}}
\nc{\frakX}{{\mathfrak X}}
\nc{\fraky}{{\frak y}}
\font\cyr=wncyr10
\font\cyrs=wncyr7
\definecolor{chu}{rgb}{0,0.5,0}
\definecolor{guo}{rgb}{0.8,0,0}
\nc{\redt}[1]{\textcolor{red}{#1}}
\nc{\li}[1]{\textcolor{red}{Li:#1}}
\nc{\ch}[1]{\textcolor{blue}{Chenghao:#1}}
\begin{document}

\title{Localization of Rota-Baxter algebras}
%
%
\author{Chenghao Chu}
\address{Department of Mathematics, Johns Hopkins University, Baltimore, MD 21218}
\email{cchu@math.jhu.edu}
\author{Li Guo}
\address{Department of Mathematics and Computer Science,
         Rutgers University,
         Newark, NJ 07102}
\email{liguo@rutgers.edu}


\date{\today}

\begin{abstract}
A commutative Rota-Baxter algebra can be regarded as a commutative algebra that carries an abstraction of the integral operator. With the motivation of generalizing the study of algebraic geometry to Rota-Baxter algebra, we extend the central concept of localization for commutative algebras to commutative Rota-Baxter algebras. The existence of such a localization is proved and, under mild conditions, its explicit constructions are obtained. The existence of tensor products of commutative Rota-Baxter algebras is also proved and the compatibility of localization and tensor product of Rota-Baxter algebras is established. We further study Rota-Baxter coverings and show that they form a Gr\"othendieck topology.
\end{abstract}

\maketitle

\tableofcontents

\setcounter{section}{0}


\section{Introduction}
\mlabel{sec:int}
Throughout this paper, all algebras are assumed to be commutative over a commutative unitary ring $\bfk$.
A well-known concept in mathematics is that of a differential algebra, defined to be a $\bfk$-algebra $R$ with a $\bfk$-linear operator $d$ on $R$ that satisfies the Leibnitz rule:
$$ d(xy)=d(x)y+xd(y), \quad \forall x,y\in R.$$
Differential algebras originated from the algebraic study of differential equations by F. Ritt and E. Kolchin~\mcite{Ri,Kol} in the last century. It is a natural yet profound extension of commutative algebra and the related algebraic geometry. It has also found important applications in arithmetic geometry, logic and computational algebra, especially in the well-known work of W.~T. Wu~\mcite{Wu} on mechanical theorem proving in geometry. The theory of algebraic geometry was formulated by Kolchin in the language of Weil in the last century. The corresponding theory in the language of Grothendieck is being studied intensively in recent years~\mcite{Gi,GKOS,SP}.
\smallskip

As an integral analogue of a differential algebra,
a Rota-Baxter algebra (of weight zero) is a $\bfk$-algebra $R$ with a $\bfk$-linear operator $P$ on $R$ that satisfies the following abstraction of the integration by parts formula:
$$ P(x)P(y)=P(xP(y))+P(P(x)y), \quad \forall x,y\in R.$$
In the special case when $R$ is taken to be the algebra of continuous functions on ${\mathbb R}$ and $P$ is the integral operator $P(f)(t)=\int_0^t f(s)\,ds$ for $f(t)\in R$, the above formula is the integration by parts formula in calculus.
See Definition~\mref{de:back} for the definition of a Rota-Baxter algebra in general.

Rota-Baxter algebra started with the probability study of G. Baxter in 1960 and has since found applications in many areas of mathematics and physics, such as combinatorics (quasi-symmetric functions), number theory (multiple zeta values), operads (dendriform algebras), Yang-Baxter equations (after the well-known physicists C. Yang and R. Baxter), especially the profound work of Connes and Kreimer on renormalization of quantum field theory~\mcite{Ag,Bai,C-K0,E-G-K3,EGM,Guw,GPXZ,G-Z}. Nevertheless, systematic theoretic study of Rota-Baxter algebras was carried out only recently. After the work of Cartier and Rota in 1970s, free commutative Rota-Baxter algebras were constructed in terms of variations of shuffles in~\mcite{Gudom,G-K1,G-K2}. These algebras are the analogue of polynomial algebras in commutative algebra or differential polynomial algebras in differential algebra. However the algebraic structure of free commutative Rota-Baxter algebras is much more involved. For example, it is a simple and basic fact that free commutative algebras and free differential commutative algebras are polynomial algebras. Such a statement for a free Rota-Baxter algebra is either non-trivial to prove (in the zero characteristic case by using Lyndon words) or simply incorrect (in the positive characteristic case).
Indeed, establishing a theory of commutative algebra and algebraic geometry in a sense comparable to differential algebra, not to mention commutative algebra, is a task that has not even been started.
\smallskip

As a step in this direction, we need to develop a suitable localization theory for commutative Rota-Baxter algebras. Here, the difficulty in comparison to differential algebras is already evident from a primitive point of view: while the derivation for a quotient can be easily derived from the derivations of the numerator and denominator by the quotient rule, there is no general way that one can derive the integral of a quotient from its numerator and denominator.

Let $S$ be a subset of a commutative Rota-Baxter algebra $R$. We first prove in Section~\mref{sec:local} the existence of the localization of $R$ by $S$ in the category of commutative Rota-Baxter algebras,  called the Rota-Baxter localization at $S$. Then, under a mild restriction of a suitable decomposition of $S^{-1}R$, the usual localization of a commutative algebra, we gave an explicit construction of {Rota-Baxter localizations} in Section~\mref{sec:const}. We also note that even if the commutative Rota-Baxter algebra has zero Rota-Baxter operator, its Rota-Baxter localization is very different from the usual localization. In Section~\mref{sec:site}, we construct the tensor product of two commutative Rota-Baxter algebras. We further define a collection of { Rota-Baxter coverings} and show that it forms a Grothendieck topology. 

\noindent
{\bf Acknowledgements.} The authors thank Zongzhu Lin for helpful discussions. Li Guo thanks NSF grant DMS-1001855 for support.

\section{Localization}
\mlabel{sec:local}
In this section, we first put together background on Rota-Baxter algebras that we  need. We then define the localization of a commutative Rota-Baxter algebra and prove its existence by concrete constructions.

\subsection{Background on Rota-Baxter algebras}
\mlabel{ss:back}
Let $\bfk$ be a commutative unitary ring with unit $1_\bfk\in \bfk$.

\begin{defn}
{\rm
\begin{enumerate}
\item Let $\lambda\in \bfk$ be given.
A {\bf Rota-Baxter $\bfk$-algebra of weight $\lambda$} is a pair $(R, P)$ where $R$ is a $\bfk$-algebra and $P\colon\, R\longrightarrow R$ is a $\bfk$-linear map, called a {\bf Rota-Baxter operator}, satisfying
\begin{equation}
P(x)P(y)=P(xP(y)+P(x)y+\lambda xy)\mbox{ for all } x,y\in R.
\mlabel{eq:rbe}
\end{equation}
\item
A {\bf morphism} $f\colon\, (R, P)\to (S, Q)$ of Rota-Baxter algebras is a $\bfk$-algebra homomorphism $f:R\to S$ such that
$$f(P(a))=Q(f(a)) \mbox{ for all } a\in R.$$
\item
A subalgebra (resp. An ideal) of $(R,P)$ is called a {\bf Rota-Baxter subalgebra} (resp. {\bf Rota-Baxter ideal}) of $(R,P)$ if it is closed under $P$.
\item
Let $I$ be a Rota-Baxter ideal of $(R, P)$. We let $(R/I,\overline{P})$ denote the Rota-Baxter algebra with the Rota-Baxter operator $\overline{P}$ defined by $\overline{P}(\overline{a})=\overline{P(a)} \mbox{ for all } \overline{a}\in R/I.$
\item
Let $(R,P_R)$ be a Rota-Baxter algebra and let $S\subseteq R$ be a subset. A Rota-Baxter subalgebra $(B, P_R)$ of $(R,P_R)$ is said to be generated by $S$ if it is the smallest Rota-Baxter algebra containing $S$, or equivalently, the intersection of all Rota-Baxter subalgebras containing $S$. \item
Let $R'$ be a subalgebra of $R$ which may not be closed under $P_R$, then $(R,P_R)$ is said to be finitely generated over $R'$ as a Rota-Baxter algebra if it is generated by the union of $R'$ and some finite subset $S\subseteq R$.
\end{enumerate}
}
\mlabel{de:back}
\end{defn}

Let $\CRB/\bfk$ denote the category of commutative Rota-Baxter $\bfk$-algebras and let $\Comm/\bfk$ denote the usual category of commutative $\bfk$-algebras. Clearly, we have the forgetful functor $$\mathrm{F}: \CRB/\bfk\longrightarrow \Comm/\bfk$$
by forgetting the Rota-Baxter operators. In \mcite{G-K1}, a free commutative Rota-Baxter algebra of weight $\lambda$ on a commutative $\bfk$-algebra $A$ is constructed in terms of a generalization of the shuffle product, called the mixable shuffle product which is a natural generalization of the quasi-shuffle product~\mcite{Ho}. This free commutative Rota-Baxter algebra on $A$ is denoted by $\sha (A)$. As a $\bfk$-module, we have
$$\sha (A) = \bigoplus\limits_{i\geq 1}A^{\otimes i}=A\oplus (A\otimes A)\oplus (A\otimes A\otimes A)\oplus\cdots$$ where the tensor is defined over $\bfk$. The multiplication on $\sha A$ is taken to be the product $\diamond$ defined as follows. Let $\mathfrak{a}=a_0\otimes \cdots \otimes a_m\in A^{\otimes(m+1)}$ and $\mathfrak{b}=b_0\otimes \cdots \otimes b_n\in A^{\otimes(n+1)}$. If $mn=0$, define

\begin{equation}
\mathfrak{a}\diamond \mathfrak{b}=
\begin{cases}
(a_0b_0)\otimes b_1 \otimes\cdots \otimes b_n, & m=0, n>0, \\
(a_0b_0)\otimes a_1 \otimes\cdots \otimes a_m, & m>0, n=0, \\
a_0b_0,                                        & m=n=0. \\
\end{cases}
\mlabel{eq:mshprod1}
\end{equation}
If $m>0$ and $n>0$, then $\mathfrak{a}\diamond \mathfrak{b}$ is inductively, on $m$ and $n$,  defined by
\begin{equation}
(a_0b_0)\otimes\big((a_1\otimes\cdots\otimes a_m)\diamond (1\otimes b_1\otimes \cdots b_n)
                 +(1\otimes a_1\otimes \cdots\otimes a_m)\diamond (b_1\otimes \cdots b_n)
                 +\lambda (a_1\otimes\cdots\otimes a_m)\diamond (b_1\otimes \cdots b_n)\big).
\mlabel{eq:mshprod2}
\end{equation}
The Rota-Baxter operator $P_{\sha A}$ on $\sha A$ is defined by
\begin{equation}
P_{\sha A}(x_0\otimes \cdots \otimes x_n)=1_A \otimes x_0\otimes\cdots\otimes x_n.
\mlabel{eq:rbo}
\end{equation}
It is proved in \cite[Corollary 4.3]{G-K1} that $(\sha, \mathrm{F})$ is an adjoint pair.

We also display the following statement for later references.
\begin{prop} The free Rota-Baxter algebra $(\sha A, P_{\sha A})$ on $A$ is generated by $A$ as a Rota-Baxter algebra.
\mlabel{prop:finteness}
\end{prop}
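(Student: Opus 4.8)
The plan is to identify the Rota-Baxter subalgebra $B$ of $(\sha A, P_{\sha A})$ generated by $A$, where $A$ is viewed as the first summand $A^{\otimes 1}\subseteq \sha A$, and to show that $B$ contains every pure tensor $a_0\otimes a_1\otimes\cdots\otimes a_n$. Since $\sha A=\bigoplus_{i\geq 1}A^{\otimes i}$ is spanned as a $\bfk$-module by such pure tensors, and $B$ is by definition a $\bfk$-submodule that is closed under $\diamond$ and under $P_{\sha A}$, this at once forces $B=\sha A$, which is exactly the assertion.

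The single computation driving everything is the interaction between $\diamond$ and $P_{\sha A}$ when the left factor is a one-term tensor. Taking $\mathfrak{a}=a_0\in A^{\otimes 1}$ (so $m=0$) in \eqref{eq:mshprod1}, and feeding in a tensor of the special shape produced by $P_{\sha A}$ via \eqref{eq:rbo}, we get, for any $n\geq 1$ and $b_0,\dots,b_{n-1}\in A$,
\begin{align*}
a_0\diamond P_{\sha A}(b_0\otimes\cdots\otimes b_{n-1})
&= a_0\diamond(1_A\otimes b_0\otimes\cdots\otimes b_{n-1}) \\
&= (a_0 1_A)\otimes b_0\otimes\cdots\otimes b_{n-1}
= a_0\otimes b_0\otimes\cdots\otimes b_{n-1}.
\end{align*}
In words, multiplying on the left by $a_0\in A$ after an application of $P_{\sha A}$ simply prepends the tensor factor $a_0$.

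With this identity in hand I would induct on the tensor length $n+1$. The base case $n=0$ is immediate, since $a_0\in A\subseteq B$. For the inductive step, $a_1\otimes\cdots\otimes a_n$ lies in $B$ by the induction hypothesis; hence $P_{\sha A}(a_1\otimes\cdots\otimes a_n)=1_A\otimes a_1\otimes\cdots\otimes a_n$ lies in $B$ because $B$ is closed under $P_{\sha A}$, and therefore $a_0\diamond P_{\sha A}(a_1\otimes\cdots\otimes a_n)=a_0\otimes a_1\otimes\cdots\otimes a_n$ lies in $B$ because $B$ is closed under $\diamond$ and contains $a_0$. Unwinding the recursion gives the closed form
\begin{equation*}
a_0\otimes a_1\otimes\cdots\otimes a_n
= a_0\diamond P_{\sha A}\big(a_1\diamond P_{\sha A}(\cdots a_{n-1}\diamond P_{\sha A}(a_n)\cdots)\big),
\end{equation*}
exhibiting each pure tensor explicitly as an expression in the elements $a_0,\dots,a_n$ of $A$ using only $\diamond$ and $P_{\sha A}$.

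I do not expect a serious obstacle: the entire content sits in spotting the prepending identity above, after which the argument is a routine induction combined with the observation that a Rota-Baxter subalgebra is in particular a $\bfk$-submodule. The only point needing a moment's care is verifying that \eqref{eq:mshprod1}, rather than the inductive rule \eqref{eq:mshprod2}, is the applicable formula, which holds precisely because the left factor has tensor length one, i.e. $m=0$; the unit $1_A$ that makes the concatenation work is supplied automatically by \eqref{eq:rbo}, so no separate generation of $1_A$ is needed.
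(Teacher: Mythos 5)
Your proof is correct, but it takes a genuinely different route from the paper. The paper disposes of the proposition abstractly: letting $(F,P)$ be the Rota--Baxter subalgebra of $(\sha A,P_{\sha A})$ generated by $A$, it observes that restricting the universal property of $\sha A$ to $F$ shows that $F$ itself satisfies the universal property of the free object on $A$, whence the inclusion $F\hookrightarrow\sha A$ must be an isomorphism. You instead argue by hand, inducting on tensor length and exhibiting every pure tensor explicitly as
$a_0\otimes a_1\otimes\cdots\otimes a_n = a_0\diamond P_{\sha A}\big(a_1\diamond P_{\sha A}(\cdots a_{n-1}\diamond P_{\sha A}(a_n)\cdots)\big)$,
using the $m=0$ clause of Eq.~(\mref{eq:mshprod1}) together with Eq.~(\mref{eq:rbo}). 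Your key identity is in fact the ``important property'' the paper invokes later in the proof of Lemma~\mref{lem:pref}(\mref{it:hom}) (where the displayed formula there has a stray $P$ on its left-hand side), so nothing in your computation is foreign to the paper --- it is just deployed earlier and for a different purpose. The trade-off: the paper's argument is shorter and works verbatim for free objects in any variety of universal algebras, while yours buys a concrete generating formula for each basis tensor, makes no appeal to the adjunction $(\sha,\mathrm{F})$ beyond the definitions of $\diamond$ and $P_{\sha A}$, and is the kind of explicit expression one actually needs in later sections (e.g.\ in Lemma~\mref{lem:pref} and Lemma~\mref{Lemma: Generators}). Both proofs are complete; yours contains no gap.
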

This follows from general principles of free objects in universal algebra. More precisely, let $(F,P)$ be the commutative Rota-Baxter subalgebra of $(\sha A,P_{\sha A})$ generated by $A$. Then, by taking restriction, the universal property of $(\sha A,P_{\sha A})$ gives the universal property of $(F,P)$.

\subsection{The existence of localization}
\mlabel{ss:exist}

We now define the localization of a commutative Rota-Baxter algebra.
\begin{defn}
Let $(R,P)$ be a commutative Rota-Baxter algebra. Let $S\subseteq R$ be a multiplicative subset. The {\bf Rota-Baxter localization of $R$ at $S$} is a commutative Rota-Baxter algebra, denoted by  $(S^{-1}_\rb R, S^{-1}P)$, together with a Rota-Baxter algebra homomorphism
$i_S: R\to S^{-1}_\rb R$ satisfying the following properties:
\begin{enumerate}
\item
The elements in $i_S(S)\subseteq S^{-1}_\rb R$ are invertible;
\item
For any commutative Rota-Baxter algebra $(R',P_{R'})$ and Rota-Baxter algebra homomorphism $f: R\to R'$ such that $f(S)\subseteq R'$ is a set of invertible elements, there is a unique $f_S: S^{-1}_\rb R\to R'$ such that $f_S\circ i_S = f$.
\end{enumerate}
\mlabel{de:local}
\end{defn}


It follows from the definition that the Rota-Baxter localization of $R$ at a multiplicative subset $S$ is unique up to isomorphisms, if it exists.  We next prove the existence of Rota-Baxter localization. Let $S^{-1}R$ be the localization of the commutative algebra $R$ at $S$. Let $(\sha(S^{-1}R),P_{\sha(S^{-1}R)})$ be the free commutative Rota-Baxter algebra on $S^{-1}R$ as constructed in~\mcite{G-K1} and recalled in Section~\mref{ss:back}. So we have
$$ \sha(S^{-1}R)=\bigoplus_{k\geq 1} (S^{-1}R)^{\ot k}, \quad
P_{\sha(S^{-1}R)}(a_1\ot \cdots \ot a_k)=1\ot a_1\ot \cdots \ot a_k, \forall a_1\ot \cdots \ot a_k \in (S^{-1}R)^{\ot k}.$$
To simplify notations, we also write $a_1\ot \cdots \ot a_k\in R^{\ot k}$ for its image in
$(S^{-1}R)^{\ot k}$.

\begin{theorem}
Let $(R,P_R)$ be a commutative Rota-Baxter algebra and $S\subseteq R$ be a multiplicative subset.
Let $I_{S^{-1}R}$ be the Rota-Baxter ideal of $\sha(S^{-1}R)$ generated by the set
\begin{equation}
\{ P_R(a) - 1\ot a\ |\ a\in R \}.
\mlabel{eq:localid}
\end{equation}
Let $(\sha(S^{-1}R)/I_{S^{-1}R}, \overline{P_{\sha(S^{-1}R)}})$ be the corresponding quotient Rota-Baxter algebra. Let
\begin{equation}
i: (R, P_R)\to (\sha(S^{-1}R)/I_{S^{-1}R}, \overline{P_{\sha(S^{-1}R)}}),\,\, a\mapsto a, \,\, a\in R.
\mlabel{eq:iloc}
\end{equation}
Then the triple $(\sha(S^{-1}R)/I_{S^{-1}R}, \overline{P_{\sha(S^{-1}R)}},i)$ is the localization of $(R,P_R)$ at $S$.
\mlabel{thm:exist}
\end{theorem}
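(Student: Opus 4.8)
The plan is to verify directly the two conditions of Definition~\mref{de:local}, using the universal property of ordinary localization and the adjunction $(\sha,\mathrm{F})$ as the two engines of the argument. Write $A=S^{-1}R$, let $j\colon R\to A$ be the canonical localization map, and let $\pi\colon \sha(A)\to \sha(A)/I_{S^{-1}R}$ be the quotient map. Throughout I identify $A$ with the degree-one summand of $\sha(A)$; under the mixable shuffle product~\eqref{eq:mshprod1} this summand is a \emph{unital} subalgebra, with $1_A$ serving as the unit of $\sha(A)$, so the embedding $A\hookrightarrow \sha(A)$ is a homomorphism of unital commutative algebras. This last observation is what makes condition (a) work.

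First I would check that $i$ is a Rota-Baxter homomorphism and that $i(S)$ consists of units. The map $i$ is the composite $R\xrightarrow{j} A\hookrightarrow \sha(A)\xrightarrow{\pi}\sha(A)/I_{S^{-1}R}$ of algebra homomorphisms, hence an algebra homomorphism; the compatibility $i(P_R(a))=\overline{P_{\sha(A)}}(i(a))$ is precisely the statement $\overline{P_R(a)}=\overline{1\ot a}$ in the quotient, which holds because $P_R(a)-1\ot a$ lies in $I_{S^{-1}R}$ by construction. For $s\in S$, the element $j(s)\in A$ is a unit with inverse $j(s)^{-1}\in A$; since $A\hookrightarrow \sha(A)$ is unital, $s$ is already invertible in $\sha(A)$, so $i(s)=\pi(s)$ is invertible in the quotient. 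This settles condition (a).

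For condition (b), suppose $f\colon (R,P_R)\to (R',P_{R'})$ is a Rota-Baxter homomorphism with $f(S)$ invertible. I would construct $f_S$ by chaining the two universal properties. The universal property of ordinary localization yields a unique algebra homomorphism $\tilde f\colon A\to R'$ with $\tilde f\circ j=f$. Viewing $R'$ as an object of $\Comm/\bfk$ and applying the adjunction $(\sha,\mathrm{F})$ to $\tilde f$, I obtain a unique Rota-Baxter homomorphism $\hat f\colon \sha(A)\to (R',P_{R'})$ whose restriction to the degree-one summand is $\tilde f$. The crux is to show that $\hat f$ descends through $\pi$, i.e.\ that $\hat f$ annihilates $I_{S^{-1}R}$. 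Since $\hat f$ is a Rota-Baxter homomorphism its kernel is a Rota-Baxter ideal, so it suffices to kill the generators: for $a\in R$ one computes $\hat f(P_R(a))=\tilde f(P_R(a))=f(P_R(a))=P_{R'}(f(a))$, while $\hat f(1\ot a)=\hat f(P_{\sha(A)}(a))=P_{R'}(\hat f(a))=P_{R'}(f(a))$, so that $\hat f(P_R(a)-1\ot a)=0$. Hence $\hat f$ factors as $f_S\circ \pi$ for a Rota-Baxter homomorphism $f_S\colon \sha(A)/I_{S^{-1}R}\to R'$, and $f_S\circ i=f$ follows by tracing an element of $R$ through the construction.

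Finally I would prove uniqueness of $f_S$, which is where Proposition~\mref{prop:finteness} does the work. By that proposition $\sha(A)$ is generated as a Rota-Baxter algebra by its degree-one summand $A$, and since $\pi$ is a surjective Rota-Baxter homomorphism the quotient $\sha(A)/I_{S^{-1}R}$ is generated as a Rota-Baxter algebra by $\pi(A)$. Any two Rota-Baxter homomorphisms out of the quotient that agree on $\pi(A)$ agree everywhere, since the locus of agreement is a Rota-Baxter subalgebra. Now if $g$ is any Rota-Baxter homomorphism with $g\circ i=f$, then every element of $\pi(A)$ has the form $\pi(j(a))\,\pi(j(s))^{-1}$, and from $g(\pi(j(a)))=f(a)=f_S(\pi(j(a)))$ together with $g(\pi(j(s))^{-1})=f(s)^{-1}=f_S(\pi(j(s))^{-1})$ one concludes that $g$ and $f_S$ agree on all of $\pi(A)$, whence $g=f_S$. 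The only genuinely delicate points are the descent of $\hat f$ through the ideal --- which hinges on the defining relations of $I_{S^{-1}R}$ being exactly the obstruction to $i$ being Rota-Baxter --- and the fact that the degree-one part generates the quotient, for which Proposition~\mref{prop:finteness} is indispensable.
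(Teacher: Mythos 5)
Your proposal is correct and follows essentially the same route as the paper: existence of $f_S$ comes from the universal property of the ordinary localization followed by the adjunction $(\sha,\mathrm{F})$ and the observation that the generators $P_R(a)-1\ot a$ die under $\hat f$, exactly as in the paper's proof. The only cosmetic differences are that you verify condition (a) explicitly (the paper leaves it implicit) and that you argue uniqueness by generation of the quotient by $\pi(S^{-1}R)$ via Proposition~\mref{prop:finteness}, whereas the paper lifts $g$ to $\sha(S^{-1}R)$ and invokes freeness --- two equivalent formulations of the same fact.
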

Because of the theorem, we will use $(\sha(S^{-1}R)/I_{S^{-1}R}, \overline{P_{\sha(S^{-1}R)}},i)$ to denote the localization $(S^{-1}_{RB}R,S^{-1}P,j_S)$.

\begin{proof}
We just need to verify that the triple $(\sha(S^{-1}R)/I_{S^{-1}R}, \overline{P_{\sha(S^{-1}R)}},i)$ satisfies the universal property of Rota-Baxter algebra localization.

Assume that $f:  (R, P_R)\to (R', P_{R'})$ is a  morphism of commutative Rota-Baxter algebras over $\bfk$. If $f(S)$ is invertible in $R'$, we have a $\bfk$-algebra morphism $S^{-1}f: S^{-1}R \to R'$. This induces a morphism of commutative Rota-Baxter algebras $\sha S^{-1}f: (\sha(S^{-1}R),P_{\sha(S^{-1}R)}) \to (R', P_{R'})$. Let $P_R(a)-1\otimes a$ be a generator of the Rota-Baxter ideal $I_{S^{-1}R}$, then
\begin{center}\begin{tabular}{cl}& $\sha S^{-1}f(P_R(a)-1\otimes a)$\\
 = & $\sha S^{-1}f(P_R(a))-\sha S^{-1}f(1\otimes a)=f(P_R(a))-\sha S^{-1}f(P_{\sha(S^{-1}R)}(a) )$\\
 = & $P_{R'}(f(a))-P_{R'}(f(a) )=0.$
\end{tabular}\end{center}
So $I_{S^{-1}R}$ is in the kernel of $f$ and hence we have an induced morphism $$S^{-1}_{RB}f: (S^{-1}_{RB}R, S^{-1}P_R) \longrightarrow (R', P_R')$$ which satisfies $f=S^{-1}_{RB}f\circ i$. If $g: (S^{-1}_{RB}R, S^{-1}P_R) \to (R', P_{R'})$ is a Rota-Baxter algebra morphism which also satisfies $f=g\circ i$, then we have a morphism
$$G =g\circ \pi:  (\sha(S^{-1}R),P_{\sha(S^{-1}R)})\longrightarrow (R', P_R') $$  where $\pi$ is the obvious quotient map. Let $j: R\to \sha(S^{-1}R)$ be the obvious algebra morphism. It is easy to see that $G\circ j=f$ and hence $G|_{S^{-1}R}=S^{-1}f$. By the universal property of free commutative Rota-Baxter algebras, this further implies that $G= \sha S^{-1}f$, i.e., $g\circ \pi = S^{-1}_{RB}f\circ \pi$. Since $\pi$ is surjective, we have $g=S^{-1}_{RB}f.$ The proof is completed.
\end{proof}
\begin{coro} Let $(R, P_R)$ be a commutative Rota-Baxter algebra and $S$ be a finitely generated multiplicative subset of $R$. Then the localization $(S^{-1}_{RB}R, S^{-1}P)$ is finitely generated over $R$ as an Rota-Baxter algebra.
\end{coro}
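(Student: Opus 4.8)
The plan is to reduce Rota-Baxter generation of $S^{-1}_{RB}R$ over $R$ to ordinary algebra generation of $S^{-1}R$ over $R$, using Proposition~\mref{prop:finteness}. First I would record the consequence of $S$ being finitely generated as a multiplicative subset: there exist finitely many elements $t_1,\dots,t_n\in R$ whose nonnegative-power products exhaust $S$, so that, as a commutative $R$-algebra, $S^{-1}R = R[1/t_1,\dots,1/t_n]$. In other words, $S^{-1}R$ is generated over $R$ as an ordinary algebra by the finite set $\{1/t_1,\dots,1/t_n\}$, since $1/(t_1^{a_1}\cdots t_n^{a_n}) = (1/t_1)^{a_1}\cdots (1/t_n)^{a_n}$.

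Next I would apply Proposition~\mref{prop:finteness} to the free Rota-Baxter algebra $\sha(S^{-1}R)$: it is generated as a Rota-Baxter algebra by its degree-one component $S^{-1}R$. Writing $\pi\colon \sha(S^{-1}R)\to S^{-1}_{RB}R$ for the surjective Rota-Baxter quotient map of Theorem~\mref{thm:exist}, it follows that $\pi(S^{-1}R)$ generates $S^{-1}_{RB}R$ as a Rota-Baxter algebra. By the multiplication rule~\eqref{eq:mshprod1} the degree-one component $S^{-1}R\subseteq\sha(S^{-1}R)$ is closed under $\diamond$, hence $\pi(S^{-1}R)$ is an ordinary subalgebra of $S^{-1}_{RB}R$; moreover, as an algebra over $i(R)=\pi(R)$ it is generated by the images $\pi(1/t_1),\dots,\pi(1/t_n)$, by the previous paragraph.

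These two facts combine immediately. Let $B$ be the Rota-Baxter subalgebra of $S^{-1}_{RB}R$ generated by $i(R)$ together with the finite set $\{\pi(1/t_1),\dots,\pi(1/t_n)\}$. Being in particular an ordinary subalgebra that contains $i(R)$ and each $\pi(1/t_j)$, $B$ contains the algebra they generate, namely $\pi(S^{-1}R)$. But $\pi(S^{-1}R)$ generates $S^{-1}_{RB}R$ as a Rota-Baxter algebra, so $B = S^{-1}_{RB}R$. Thus $S^{-1}_{RB}R$ is finitely generated over $R$ as a Rota-Baxter algebra, with explicit finite generating set $\{\pi(1/t_1),\dots,\pi(1/t_n)\}$ adjoined to $i(R)$.

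I do not anticipate a genuine obstacle; the argument is a bookkeeping of three nested notions of generation, namely multiplicative generation of $S$, algebra generation of $S^{-1}R$ over $R$, and Rota-Baxter generation of $S^{-1}_{RB}R$ over $S^{-1}R$. The only points deserving care are that the structure map $i$ need not be injective, so every generation statement should be phrased in terms of images inside $S^{-1}_{RB}R$ rather than inside $R$ or $S^{-1}R$; and the verification that the degree-one part of $\sha(S^{-1}R)$ is genuinely a subalgebra, which is immediate from~\eqref{eq:mshprod1}.
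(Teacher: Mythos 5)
Your proposal is correct and follows essentially the same route as the paper's own proof: realize $S^{-1}_{RB}R$ as a quotient of the free Rota-Baxter algebra $\sha(S^{-1}R)$, invoke Proposition~\ref{prop:finteness} to see that the latter is generated by $S^{-1}R$ as a Rota-Baxter algebra, and use that $S^{-1}R$ is a finitely generated $R$-algebra because $S$ is a finitely generated multiplicative set. You merely spell out the final bookkeeping (passing generation through the surjection $\pi$ and exhibiting the explicit generators $\pi(1/t_j)$) that the paper leaves implicit.
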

\begin{proof} It follows  from the proof of Theorem~\mref{thm:exist} that  $(S^{-1}_{RB}R, S^{-1}P)$ is a quotient of the free Rota-Baxter algebra  $(\sha S^{-1}R, P_{\sha S^{-1}}R)$. The latter is generated by $S^{-1}R$ as a Rota-Baxter algebra by Proposition \ref{prop:finteness}. Since $S$ is a finitely generated multiplicative subset, $S^{-1}R$ is a finitely generated algebra over $R$. Thus $(S^{-1}_{RB}R, S^{-1}P)$ is finitely generated over $R$ as a Rota-Baxter algebra.
\end{proof}

\section{Constructions of localization}
\mlabel{sec:const}
For further study of algebraic geometry of Rota-Baxter algebras, we need to give explicit constructions of Rota-Baxter algebra localization. We assume that $S^{-1}R=R \oplus V$ as $\bfk$-modules, where the direct summand $R$ denotes its image in $S^{-1}R$ in order to simply notations. This is true for example when $\bfk$ is a field. We also compare the Rota-Baxter algebra localization with the usual localization of commutative algebras.

\subsection{The general weight case}

\begin{theorem} Let $(R, P_R)$ be a commutative Rota-Baxter algebra of weight $\lambda$ over $\bfk$. Assume that $S$ is a multiplicative subset of $R$ such that $S^{-1}R=R \oplus V$ for a nonunitary subring $V$ of $R$.
Then the Rota-Baxter localization of $R$ at $S$ is given by
$$S^{-1}_\rb R= S^{-1}R\oplus \bigoplus_{k\geq 1} (S^{-1}R \ot V^{\ot k})$$
with the multiplication given by the mixable shuffle product.
\mlabel{thm:locnzero}
\end{theorem}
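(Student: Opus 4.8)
We already know from Theorem~\mref{thm:exist} that the Rota-Baxter localization exists and is given abstractly by the quotient
$\sha(S^{-1}R)/I_{S^{-1}R}$, where $I_{S^{-1}R}$ is the Rota-Baxter ideal generated by
$\{P_R(a)-1\ot a\mid a\in R\}$. So the real content of the theorem is to produce an explicit $\bfk$-module
decomposition of this quotient, and my plan is to do exactly that: I will exhibit the proposed module
$M:=S^{-1}R\oplus\bigoplus_{k\geq 1}(S^{-1}R\ot V^{\ot k})$ as a $\bfk$-submodule of $\sha(S^{-1}R)$ that maps isomorphically onto the quotient. Concretely, I would first observe that as a $\bfk$-module
$\sha(S^{-1}R)=\bigoplus_{k\geq 1}(S^{-1}R)^{\ot k}$, and using the hypothesis $S^{-1}R=R\oplus V$ I would
expand each tensor factor to write every $(S^{-1}R)^{\ot k}$ as a direct sum of ``pure'' tensors whose slots
are either the $R$-summand or the $V$-summand. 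The idea is that the relations $P_R(a)-1\ot a$ let one rewrite
any interior tensor slot that lies in the image of $R$, collapsing it into the preceding slot via the
Rota-Baxter and multiplication structure, so that a canonical representative of each coset has all slots after
the first drawn from $V$. This is precisely the shape of $M$.

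The key steps, in order, are: (i) set up the candidate $\bfk$-linear projection $\sha(S^{-1}R)\to M$, or
dually the inclusion $M\hookrightarrow\sha(S^{-1}R)$, and verify it is a section of the quotient map; (ii) show
$I_{S^{-1}R}\cap M=0$, i.e. no nonzero element of $M$ lies in the Rota-Baxter ideal, which gives injectivity of
$M\to\sha(S^{-1}R)/I_{S^{-1}R}$; (iii) show $M+I_{S^{-1}R}=\sha(S^{-1}R)$, i.e. every element is congruent modulo
$I_{S^{-1}R}$ to one in $M$, which gives surjectivity. For step (iii) I would argue by induction on the tensor
length $k$: given a generator $a_1\ot\cdots\ot a_k$, decompose the last factor $a_k=r+v$ with $r\in R$, $v\in V$;
the $v$-part is already in the right form, while for the $r$-part I would use a relation of the form
$P_R(r)\equiv 1\ot r$ together with the mixable shuffle product formulas~\eqref{eq:mshprod1} and
\eqref{eq:mshprod2} to absorb the $R$-slot into its neighbor, reducing the length and invoking the inductive
hypothesis. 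Finally (iv) I would transport the multiplication: the product on $M$ is just the restriction of
$\diamond$, so I must check $M$ is closed under $\diamond$ up to $I_{S^{-1}R}$, which follows because the
inductive formula~\eqref{eq:mshprod2} only ever produces slots in $S^{-1}R$ in the head and $V$ in the tail once
representatives are normalized, using that $V$ is a (nonunitary) subring and hence closed under multiplication.

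The main obstacle I anticipate is step (ii), establishing that $M$ meets $I_{S^{-1}R}$ trivially. Generators of a
Rota-Baxter ideal are closed not only under multiplication by the algebra but also under the operator $P$, so
$I_{S^{-1}R}$ contains elements of unbounded tensor length produced by repeatedly applying
$P_{\sha(S^{-1}R)}$ and multiplying by arbitrary elements; controlling these and proving that none of the
resulting relations impose a linear dependence among the chosen normal-form representatives in $M$ is the
delicate bookkeeping. I expect to handle this by making the normalization of step (iii) \emph{canonical}, for
instance by defining an explicit $\bfk$-linear retraction $\rho:\sha(S^{-1}R)\to M$ that kills every generator
of $I_{S^{-1}R}$ and restricts to the identity on $M$; once such a $\rho$ is constructed and shown to be
well-defined and Rota-Baxter compatible, both (ii) and (iii) follow at once, since $\ker\rho=I_{S^{-1}R}$ and
$\rho|_M=\id$. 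The crux is therefore the clean definition of $\rho$ and the verification that it annihilates the
$P$-closure of the generating set, which is where the $R\oplus V$ hypothesis does the essential work.
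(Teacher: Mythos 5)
Your plan for surjectivity (your step (iii)) is sound and matches the kind of normal-form computation the mixable shuffle formulas support, but the proposal has a genuine gap at its self-identified crux, step (ii). A $\bfk$-linear retraction $\rho:\sha(S^{-1}R)\to M$ that kills the generators $P_R(a)-1\ot a$ does not automatically kill the Rota-Baxter ideal $I_{S^{-1}R}$ they generate: that ideal is the closure of the generating set under multiplication by arbitrary elements of $\sha(S^{-1}R)$ \emph{and} under the operator $P_{\sha(S^{-1}R)}$, so to conclude $I_{S^{-1}R}\subseteq\ker\rho$ you need $\ker\rho$ to be stable under both operations, i.e.\ you need $\rho$ to be (modulo its kernel) an algebra homomorphism intertwining the operators, which forces you to equip the target $M$ with a Rota-Baxter operator. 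You never define one, and this is precisely the missing idea: $M$ carries a weight-$\lambda$ operator $P_B$ that is \emph{not} the restriction of $P_{\sha(S^{-1}R)}$ (one must set $P_B(a)=P_R(a)$ for $a\in R$, $P_B(\bar v)=1\ot\bar v$ for $\bar v\in V^{\ot k}$, and inductively $P_B(a\ot\bar u)=P_B(a)P_B(\bar u)-P_B(P_B(a)\bar u)-\lambda P_B(a\bar u)$), and verifying that this $P_B$ satisfies the Rota-Baxter identity is the bulk of the work --- a double induction that uses the hypothesis that $V$ is closed under multiplication.

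The paper's proof avoids your step (ii) entirely. Once $(B,P_B)$ is shown to be a Rota-Baxter algebra, the map $R\to S^{-1}R\subseteq B$ inverts $S$, so the universal property of the localization yields $h:S^{-1}_{\rb}R\to B$ with $f\circ h=\id$; hence $h$ is injective for free, and it is surjective because $B$ is generated by $S^{-1}R$ as a Rota-Baxter algebra (an easy induction). No statement of the form $M\cap I_{S^{-1}R}=0$ is ever proved directly. If you want to salvage the retraction strategy, the clean way is to take $\rho$ to be the Rota-Baxter homomorphism $\sha(S^{-1}R)\to(B,P_B)$ induced by freeness from the identity of $S^{-1}R$ --- but that presupposes exactly the construction and verification of $P_B$ that your proposal omits.
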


The equation in the theorem can be alternatively stated as the tensor product algebra
$$ S^{-1}_\rb R = S^{-1}R \ot \Big( \bigoplus_{k\geq 0}  V^{\otimes k}\Big)
$$
where the product in the second tensor factor is given by the mixable shuffle product or quasi-shuffle product~\mcite{EGsh,G-Z2}.

We also note that the right hand side of the equation in the theorem can be regarded as a subalgebra of $\sha(S^{-1}R)$ since we require that $V$ is closed under the multiplication in $S^{-1}R$ and hence in $\sha (S^{-1}R)$. However, this subalgebra is not a Rota-Baxter subalgebra of $(\sha(S^{-1}R), P_{\sha(S^{-1}R)})$ as we will see in the proof that the Rota-Baxter operators are different.

\begin{proof} Let $B$ denote the subalgebra $S^{-1}R\oplus \bigoplus_{k\geq 1} (S^{-1}R \ot V^{\ot k})$ of $\sha(S^{-1}R)$.
We have a natural map of algebras
\begin{equation}
f: B \longrightarrow \sha S^{-1}R \longrightarrow S^{-1}_\rb R.
\mlabel{eq:locmap}
\end{equation}
We will define a Rota-Baxter operator $P:=P_B$ of weight $\lambda$ on $B$ such that $f$ is an isomorphism of Rota-Baxter algebras, which will prove the theorem. We accomplish this in the following three steps.

\begin{enumerate}
\item[{\bf Step 1.}] Give the definition of $P$;
\item[ {\bf Step 2.}] Verify that $P$ is a Rota-Baxter operator;
\item[{\bf Step 3.}] Show that $f$ is a Rota-Baxter algebra isomorphism.
\end{enumerate}

\noindent
{\bf Step 1. Definition of $P$.} For $k=0$, we define a $\bfk$-linear map $P: S^{-1}R=R\oplus V \to B$ by assigning $P(a)=P_R(a)$ and $P(v)=1\ot v$. For each $k\geq 1$, we define a $\bfk$-linear map by induction on $k$
$$P\,:\, S^{-1}R \ot  V^{\otimes k} =(R\ot V^{\otimes k})\oplus V^{\otimes k+1} \to B$$  by assigning
\begin{center}\begin{tabular}{rcl}
$P(\bar{v})$ & = & $P_{\sha S^{-1}R}(\bar{v})=1\otimes \bar{v}$ \quad where $\bar{v}\in V^{\otimes k+1}$\\
$P(a\otimes \bar{u})$ & = & $P(a)\otimes \bar{u}-P(P(a)\bar{ u})-\lambda P(a\bar{u}) $ \\
                      & = & $P(a)P(\bar{u})-P(P(a)\bar{u})-\lambda P(a\bar{u}) $ \quad where $a\in R $ and $\bar {u}\in V^{\otimes k} $.
\end{tabular}
\end{center}
One checks that $P$ extends by linearity to a well defined $\bfk$-linear map $P: B\to B$.
\smallskip

\noindent
{\bf Step 2. $P$ is a Rota-Baxter operator. }
Since $P$ is $\bfk$-linear, it is enough to show that
\begin{equation}\label{ToBeChecked} P(\alpha)P(\beta) - P\big(\alpha P(\beta)+P(\alpha) \beta +\lambda \alpha \beta\big)=0 \end{equation}
for any
$\alpha=(a+v)\otimes \overline{v}\in (R\oplus V)\otimes V^{\otimes n}, \beta=(b+u)\otimes \overline{u} \in (R\oplus V)\otimes V^{\otimes m}$ where
$$a, b\in R; \;  u, v\in V; \; \bar{v}= v_1\otimes\cdots\otimes v_n \in V^{\otimes n} \mbox{ and } \bar{u}= u_1\otimes\cdots\otimes u_m\in V^{\otimes m}.$$
Let $$\begin{array}{rcl}
I_1&=& P(a\otimes \overline{v})P(b\otimes \overline{u}) -P\big((a\otimes \overline{v}) P(b\otimes \overline{u})+P(a\otimes \overline{v}) (b\otimes \overline{u})+ \lambda (a\otimes \overline{v}) (b\otimes \overline{u}) \big)\\
I_2&=&  P(v\otimes \overline{v})P(b\otimes \overline{u})-P\big((v\otimes \overline{v}) P(b\otimes \overline{u})+P(v\otimes \overline{v}) (b\otimes \overline{u})+ \lambda (v\otimes \overline{v}) (b\otimes \overline{u}) \big) \\
I_3&=& P(a\otimes\overline{v})P(u\otimes\overline{u}) - P\big((a\otimes \overline{v}) P(u\otimes\overline{u})+P(a\otimes \overline{v})   (u\otimes \overline{u})+ \lambda (a\otimes \overline{v}) (u\otimes \overline{u}) \big)   \\
I_4&=& P(v\otimes \overline{v})P(u\otimes \overline{u})-P\big((v\otimes \overline{v}) P(u\otimes \overline{u})+P(v\otimes \overline{v})  (u\otimes \overline{u})+ \lambda (v\otimes \overline{v}) (u\otimes \overline{u}) \big)\\
\end{array}$$
Since $I_4=0$ because $P=P_{\sha S^{-1}A}$ when restricted to $V^{\otimes n}$ for any $n\geq 1$, we see that
 $$P(\alpha)P(\beta) - P\big(\alpha P(\beta)+P(\alpha) \beta +\lambda \alpha \beta\big)=I_1+I_2+I_3+I_4=I_1+I_2+I_3.$$

\begin{lemma}\label{mn=0} The identity $(\ref{ToBeChecked})$ holds when $mn=0$.
\end{lemma}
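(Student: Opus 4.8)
The plan is to treat the left-hand side of \eqref{ToBeChecked} as a function $\Delta(\alpha,\beta):=P(\alpha)P(\beta)-P\bigl(\alpha P(\beta)+P(\alpha)\beta+\lambda\alpha\beta\bigr)$ and to exploit two formal features of it. Since $P$ is $\bfk$-linear and the product of $B$ is $\bfk$-bilinear, $\Delta$ is $\bfk$-bilinear; since $B$ is commutative, $\Delta(\alpha,\beta)=\Delta(\beta,\alpha)$. Hence, interchanging $\alpha$ and $\beta$ if needed, the case $mn=0$ reduces to $n=0$, where $\alpha=a+v\in R\oplus V=S^{-1}R$ and $\beta=(b+u)\otimes\overline{u}$ is an arbitrary element of $S^{-1}R\otimes V^{\otimes m}$. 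Equivalently, I must show $\Delta(s,\gamma)=0$ for all $s\in S^{-1}R$ and all $\gamma\in B$, which I would prove by induction on the integer $d$ with $\gamma\in S^{-1}R\otimes V^{\otimes d}$, after splitting $s=a+v$ and $\gamma$ into its $R$-leading and pure-$V$ parts so that $\Delta(s,\gamma)=I_1+I_2+I_3+I_4$.

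Two of the four summands vanish for structural reasons. The term $I_4=\Delta(v,u\otimes\overline{u})$ vanishes because both arguments then lie in $\bigoplus_{k\geq 1}V^{\otimes k}$, on which $P$ coincides with $P_{\sha(S^{-1}R)}$, a genuine Rota-Baxter operator. The term $I_3=\Delta(a,u\otimes\overline{u})$ vanishes by the very definition of $P$: writing $\overline{w}=u\otimes\overline{u}\in V^{\otimes(m+1)}$, one has $P(\overline{w})=1\otimes\overline{w}$ and $a\,P(\overline{w})=a\otimes\overline{w}$, so $\Delta(a,\overline{w})=P(a)\otimes\overline{w}-\bigl(P(a\otimes\overline{w})+P(P(a)\overline{w})+\lambda P(a\overline{w})\bigr)$, which is exactly zero by the Step~1 recursion $P(a\otimes\overline{w})=P(a)\otimes\overline{w}-P(P(a)\overline{w})-\lambda P(a\overline{w})$. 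In other words, that recursion was rigged precisely so that the Rota-Baxter identity holds on the nose whenever one argument is a pure $V$-tensor and the other lies in $R$.

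It therefore remains to handle $I_1+I_2=\Delta(a+v,\,b\otimes\overline{u})$ with $b\in R$. In the base case $d=0$ (so $\gamma=b+u\in S^{-1}R$) this follows from the vanishings just established together with $\Delta(a,b)=0$, which is nothing but the Rota-Baxter identity for $P_R$ inside $R$. For the inductive step I would substitute $P(b\otimes\overline{u})=P_R(b)\otimes\overline{u}-P(P_R(b)\overline{u})-\lambda P(b\overline{u})$ into both $\Delta(a,b\otimes\overline{u})$ and $\Delta(v,b\otimes\overline{u})$, expand every mixable-shuffle product that appears, and collect terms: the $R$-leading contributions recombine through the Rota-Baxter identity of $P_R$, while the arguments $P_R(b)\overline{u}$ and $b\overline{u}$, and their products with $P(a),a,P(v),v$, all lie in $S^{-1}R\otimes V^{\otimes(d-1)}$ and are reached by the induction hypothesis and the pure-$V$ vanishings above.

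The main obstacle is the bookkeeping in this inductive step, and it is caused entirely by the fact (noted before the proof) that $B$ is not a Rota-Baxter subalgebra of $\sha(S^{-1}R)$: whenever a scalar $r\in R$ (here $P_R(b)$, $b$, $P_R(a)$, or $a$) is multiplied into the leading $V$-slot of a tensor, the product $r\,u_1\in S^{-1}R$ must be decomposed along $S^{-1}R=R\oplus V$, and its two pieces are then fed to $P$ through \emph{different} clauses of the Step~1 definition. Tracking all of these $R\oplus V$ splittings and verifying that the correction terms they generate cancel against those produced by the recursion, together with the extra shuffle terms carrying the weight $\lambda$, is the delicate part; everything else is a mechanical, if lengthy, expansion. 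Once these cancellations are checked one gets $I_1+I_2=0$, and with $I_3=I_4=0$ this yields \eqref{ToBeChecked} in the case $mn=0$.
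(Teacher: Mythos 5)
Your reductions are all sound and they follow the same route as the paper's proof: the symmetry $\Delta(\alpha,\beta)=\Delta(\beta,\alpha)$ lets you place the scalar factor in one slot, and your disposal of the two ``easy'' summands is correct and complete. The case where both arguments are pure $V$-tensors holds because $P$ restricts to $P_{\sha(S^{-1}R)}$ on $\bigoplus_{k\geq 1}V^{\ot k}$, which is closed under the product since $V$ is multiplicatively closed; and the case $\Delta(a,\bar w)$ with $a\in R$ and $\bar w$ a pure $V$-tensor vanishes identically because $aP(\bar w)=a\ot\bar w$ and the Step~1 recursion is precisely the statement $P(a\ot\bar w)=P(a)P(\bar w)-P(P(a)\bar w)-\lambda P(a\bar w)$. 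That last observation is a cleaner version of what the paper merely asserts as ``$I_2=0$ by the definition of $P$.''

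The gap is that the two remaining summands, $\Delta(a,b\ot\bar u)$ and $\Delta(v,b\ot\bar u)$ with $b\in R$, are never actually shown to vanish: you describe the inductive step (substitute the recursion for $P(b\ot\bar u)$, expand the mixable-shuffle products, collect terms, check that the corrections cancel) but do not execute it, and these two computations are the entire substance of the lemma --- they are the two long displayed chains in the paper's proof. It is not self-evident that the cancellation goes through: the paper's verification needs, at its step $(II)$, the Rota--Baxter identity of $P_R$ inside $R$ together with the induction hypothesis applied to $a\bar v$ and $P(a)\bar v$, and at steps $(III)$--$(IV)$ the induction hypothesis applied to $\bar v$ itself, before the expression collapses to $P(b\cdot 0+0\cdot\bar v)=0$. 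Your diagnosis that the difficulty lies in tracking the $R\oplus V$ splittings of products such as $P_R(b)u_1$ is also somewhat misplaced: the paper never performs those splittings --- $P$ is $\bfk$-linear, so terms like $P(P(a)\bar v)$ and $P(a\bar v)$ are carried along intact and cancel formally --- and the real work is organizing the expansion so that the induction hypothesis applies only to arguments of strictly smaller tensor degree. Until the two inductive computations are written out, what you have is a correct plan rather than a proof.
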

\begin{proof}
Assume that  $m=n=0$. In this case,  $I_1=0$ because $P=P_A$ when restricted to $A\subseteq B$. $I_2=I_3=0$ because of the definition of $P$. So the identity $(\ref{ToBeChecked})$ holds when $m=n=0$.

By symmetry we now assume that $m=0$ and prove the lemma by induction on $n$. The case when $n=0$ has been checked. In general, we first notice that $I_2=0$ because of the definition of $P$.
{\allowdisplaybreaks
\begin{eqnarray*}
 & & P(a\otimes \bar{v})P(b) - P\big( (a\otimes \bar{v})P(b) + P(a\otimes \bar{v})b + \lambda (a\otimes \bar{v}) b \big)\\
&\stackrel{(I)}{=} & \Big(P(a)P(\bar{v})-P\big(P(a)\bar{v}\big)-\lambda P(a\bar{v}) \Big)P(b) -P\Big( aP(\bar{v})P(b) + P\big(aP(\bar{v})\big)b
   + \lambda aP(\bar{v}) b\Big) \\
&= & P(a)P(\bar{v})P(b)-\lambda P(a\bar{v}) P(b) -P\big(P(a)\bar{v}\big)P(b) -P\Big( aP(\bar{v})P(b) +
    P\big(aP(\bar{v})\big)b+\lambda aP(\bar{v})b\Big)\\
&\stackrel{(II)}{=} &  P\big(aP(b)+P(a)b+\lambda ab \big)P(\bar{v})\\
&  & -\lambda P\big( a\bar{v}P(b)+ P(a\bar{v})b+\lambda a\bar{v}b\big)- P\Big(P(a)\bar{v}P(b)+P\big(P(a)\bar{v} \big) b
    +\lambda P(a)\bar{v}b\Big) \\
 & & - P\Big( aP(\bar{v})P(b) + P\big(aP(\bar{v})\big)b+\lambda aP(\bar{v})b \Big) \\
&\stackrel{(III)}{=}  &  P\Big(\big( aP(b)+P(a)b+\lambda ab\big) P(\bar{v})+
P\big(aP(b)+P(a)b+\lambda ab\big)\bar{v} + \lambda \big( aP(b)+P(a)b+\lambda ab \big)\bar{v}  \Big)\\
 & & -\lambda P\big( a\bar{v}P(b)+ P(a\bar{v})b+\lambda a\bar{v}b\big)- P\Big(P(a)\bar{v}P(b)+P\big(P(a)\bar{v} \big) b
    +\lambda P(a)\bar{v}b\Big) \\
 & & - P\Big( aP(\bar{v})P(b) + P\big(aP(\bar{v})\big)b+\lambda aP(\bar{v})b\Big) \\
&= &  P\Big(  aP(b)P(\bar{v})+P(a)bP(\bar{v})+\lambda abP(\bar{v})
             + P\big(aP(b)\big)\bar{v} + P\big(P(a)b\big)\bar{v}+ P\big(\lambda ab\big)\bar{v} \\
&  &\quad     + \lambda aP(b)\bar{v} +\lambda P(a)b \bar{v}+\lambda^2 ab \bar{v}  \Big) \\
&  & \!\!\!\! -P\Big( \lambda a\bar{v}P(b)+\lambda P(a\bar{v})b+\lambda^2 a\bar{v}b+P(a)\bar{v}P(b)+P\big(P(a)v \big) b
    +\lambda P(a)\bar{v}b \\
 & & \quad +aP(\bar{v})P(b) + P\big(aP(\bar{v})\big)b+\lambda aP(\bar{v})b \Big) \\
&= &  P\Big(  P(a)bP(\bar{v})
             + P\big(aP(b)\big)\bar{v} + P\big(P(a)b\big)\bar{v}+ P\big(\lambda ab\big)\bar{v}  \Big) \\
 & & \!\!\!\! -P\Big(\lambda P(a\bar{v})b+P(a)\bar{v}P(b)+P\big(P(a)\bar{v} \big) b  + P\big(aP(\bar{v})\big)b \Big) \\
&= & P\Big[ \Big( P(a)P(\bar{v}) - P\big( \lambda a\bar{v} + P(a)\bar{v} + aP(\bar{v})\big) \Big) b +
     \Big( P\big( aP(b)+  P(a)b + \lambda ab \big) -P(a)P(b)     \Big) \bar{v} \Big]    \\
&\stackrel{(IV)}{=} & P(b0+0\bar{v})=0.
\end{eqnarray*}
}

 In the above calculation, equation $(I)$ follows from the definition of $P$. Equations $(II)$ follows from the fact that $P=P_{A}$ on $A$ and the induction hypothesis because both $a\bar{v}$ and $P(a)\bar{v}$ are in  $S^{-1}A\otimes V^{\otimes {n-1}}$. Equations $(III)$ and $(IV)$ follow from the induction hypothesis because $\bar{v}\in  S^{-1}A\otimes V^{\otimes {n-1}}$. This shows that $I_1=0$.

 The proof that $I_3=0$ goes similarly where equation $(II)$ has to be modified.

{\allowdisplaybreaks
\begin{eqnarray*}
&  & P(a\otimes \bar{v})P(u) - P\big( (a\otimes \bar{v})P(u) + P(a\otimes \bar{v})u + \lambda (a\otimes \bar{v}) u \big)\\
&\stackrel{(I)}{=} & \Big(P(a)P(\bar{v})-P\big(P(a)\bar{v}\big)-\lambda P(a\bar{v}) \Big)P(u) -P\Big( aP(\bar{v})P(u) + P\big(aP(\bar{v})\big)u
   + \lambda aP(\bar{v}) u\Big) \\
&= & P(a)P(\bar{v})P(u)-\lambda P(a\bar{v}) P(u) -P\big(P(a)\bar{v}\big)P(u) -P\Big( aP(\bar{v})P(u) +
    P\big(aP(\bar{v})\big)u+\lambda aP(\bar{v})u\Big)\\
&\stackrel{(II)}{=} &  P(a)P\big( \bar{v}P(u)+P(\bar{v})u+\lambda \bar{v} u \big)\\
 & & -\lambda P\big( a\bar{v}P(u)+ P(a\bar{v})u+\lambda a\bar{v}u\big)- P\Big(P(a)\bar{v}P(u)+P\big(P(a)\bar{v} \big) u
    +\lambda P(a)\bar{v}u\Big) \\
  && - P\Big( aP(\bar{v})P(u) + P\big(aP(\bar{v})\big)u+\lambda aP(\bar{v})u \Big) \\
&\stackrel{(III)}{=}  &  P\Big( a P\big( \bar{v}P(u)+P(\bar{v})u+\lambda \bar{v} u \big) +
                         P(a) \big( \bar{v}P(u)+P(\bar{v})u+\lambda \bar{v} u  \big) +
                         \lambda a \big( \bar{v}P(u)+P(\bar{v})u+\lambda \bar{v} u \big)  \Big)\\
 & & -\lambda P\big( a\bar{v}P(u)+ P(a\bar{v})u+\lambda a\bar{v}u\big)- P\Big(P(a)\bar{v}P(u)+P\big(P(a)\bar{v} \big) u
    +\lambda P(a)\bar{v}u\Big) \\
&  & - P\Big( aP(\bar{v})P(u) + P\big(aP(\bar{v})\big)u+\lambda aP(\bar{v})u\Big) \\
&= &  P\Big( a P\big( \bar{v}P(u) \big)+a P\big(P(\bar{v})u\big) + \lambda a P\big(\bar{v} u \big) +
                         P(a)  \bar{v}P(u) + P(a) P(\bar{v})u  + \lambda P(a)\bar{v} u \\
&  &  \quad +\lambda a \bar{v}P(u)+\lambda a P(\bar{v})u+\lambda^2 a  \bar{v} u  \Big)\\
&  & \!\!\!\! -P\Big( \lambda a\bar{v}P(u)+\lambda P(a\bar{v})u+\lambda^2 a\bar{v}u+P(a)\bar{v}P(u)+P\big(P(a)v \big) u
    +\lambda P(a)\bar{v}u \\
&  & \quad +aP(\bar{v})P(u) + P\big(aP(\bar{v})\big)u+\lambda aP(\bar{v})u \Big) \\
&= &  P\Big( a P\big( \bar{v}P(u) \big)+a P\big(P(\bar{v})u\big) + \lambda a P\big(\bar{v} u \big) + P(a) P(\bar{v})u  \Big)\\
&  & \!\!\!\! -P\Big( \lambda P(a\bar{v})u+P\big(P(a)\bar{v} \big) u +aP(\bar{v})P(u) + P\big(aP(\bar{v})\big)u \Big) \\
&= & P\Big[ a \Big( P\big( \bar{v}P(u)+P(\bar{v})u + \lambda \bar{v}u \big) - P(\bar{v})P(u)  \Big)  +
     \Big( P(a)P(\bar{v})- P\big( \lambda a\bar{v}+  P(a)\bar{v} + aP(\bar{v}) \big) \Big) u \Big]    \\
&\stackrel{(IV)}{=} & P(a0+0u)=0.
\end{eqnarray*}
}
 In the above calculation, equation $(I)$ follows from the definition of $P$. Equation $(II)$ follows from the fact that $P=P_{\sha S^{-1}A}$ on $\bigoplus\limits_{n\geq 1}V^{\otimes n}$ and the induction hypothesis because both $a\bar{v}$ and $P(a)\bar{v}$ are in  $S^{-1}A\otimes V^{\otimes {n-1}}$. Equation $(III)$ follows from the definition of $P$ since $\bar{v}P(u)+P(\bar{v})u+\lambda \bar{v} u \in \bigoplus\limits_{n\geq 1}V^{\otimes n}$. Equation $(IV)$ follows from the induction hypothesis because $\bar{v}\in  S^{-1}A\otimes V^{\otimes {n-1}}$. This shows that $I_3=0$. The lemma is proved.
 \end{proof}

We now prove Eq.~(\ref{ToBeChecked}) by induction on $m+n$. Assume that that Eq.~(\ref{ToBeChecked}) holds when $m+n\leq k$ for some integer $k\geq 1$. In view of Lemma~\ref{mn=0}, we assume that $mn\neq 0$.  Recall that $\beta=(b+u)\otimes \bar{u}$. The following calculation shows that $I_1=0$ (let $u=0$ in $\beta$) and $I_3=0$ (let $b=0$ in $\beta$).
{\allowdisplaybreaks
\begin{eqnarray*}
&  & P(a\otimes \bar{v})P(\beta) - P\big( (a\otimes \bar{v})P(\beta) + P(a\otimes \bar{v})\beta + \lambda (a\otimes \bar{v}) \beta \big)\\
&\stackrel{(I)}{=} & \Big(P(a)P(\bar{v})-P\big(P(a)\bar{v}\big)-\lambda P(a\bar{v}) \Big)P(\beta) -P\Big( aP(\bar{v})P(\beta) + P\big(aP(\bar{v})\big)\beta
   + \lambda aP(\bar{v}) \beta\Big) \\
&= & P(a)P(\bar{v})P(\beta)-\lambda P(a\bar{v}) P(\beta) -P\big(P(a)\bar{v}\big)P(\beta) -P\Big( aP(\bar{v})P(\beta) +
    P\big(aP(\bar{v})\big)\beta+\lambda aP(\bar{v})\beta\Big)\\
&\stackrel{(II)}{=} &  P(a)P\big( \bar{v}P(\beta)+P(\bar{v})\beta+\lambda \bar{v} \beta \big)\\
&  & -\lambda P\big( a\bar{v}P(\beta)+ P(a\bar{v})\beta+\lambda a\bar{v}\beta\big)- P\Big(P(a)\bar{v}P(\beta)+P\big(P(a)\bar{v} \big) \beta
    +\lambda P(a)\bar{v}\beta\Big) \\
&  & - P\Big( aP(\bar{v})P(\beta) + P\big(aP(\bar{v})\big)\beta+\lambda aP(\bar{v})\beta \Big) \\
&\stackrel{(III)}{=}  &  P\Big( a P\big( \bar{v}P(\beta)+P(\bar{v})\beta+\lambda \bar{v} \beta \big) +
                         P(a) \big( \bar{v}P(\beta)+P(\bar{v})\beta+\lambda \bar{v} \beta  \big) +
                         \lambda a \big( \bar{v}P(\beta)+P(\bar{v})\beta+\lambda \bar{v} \beta \big)  \Big)\\
&  & -\lambda P\big( a\bar{v}P(\beta)+ P(a\bar{v})\beta+\lambda a\bar{v}\beta\big)- P\Big(P(a)\bar{v}P(\beta)+P\big(P(a)\bar{v} \big) \beta
    +\lambda P(a)\bar{v}\beta\Big) \\
&  & - P\Big( aP(\bar{v})P(\beta) + P\big(aP(\bar{v})\big)\beta+\lambda aP(\bar{v})\beta\Big) \\
&= &  P\Big( a P\big( \bar{v}P(\beta) \big)+a P\big(P(\bar{v})\beta\big) + \lambda a P\big(\bar{v} \beta \big) +
                         P(a)  \bar{v}P(\beta) + P(a) P(\bar{v})\beta  + \lambda P(a)\bar{v} \beta \\
&  &  \quad +\lambda a \bar{v}P(\beta)+\lambda a P(\bar{v})\beta+\lambda^2 a  \bar{v} \beta  \Big)\\
&  & \!\!\!\! -P\Big( \lambda a\bar{v}P(\beta)+\lambda P(a\bar{v})\beta+\lambda^2 a\bar{v}\beta+P(a)\bar{v}P(\beta)+P\big(P(a)v \big) \beta
    +\lambda P(a)\bar{v}\beta \\
&  & \quad +aP(\bar{v})P(\beta) + P\big(aP(\bar{v})\big)\beta+\lambda aP(\bar{v})\beta \Big) \\
&= &  P\Big( a P\big( \bar{v}P(\beta) \big)+a P\big(P(\bar{v})\beta\big) + \lambda a P\big(\bar{v} \beta \big) + P(a) P(\bar{v})\beta  \Big)\\
&  & \!\!\!\! -P\Big( \lambda P(a\bar{v})\beta+P\big(P(a)\bar{v} \big) \beta +aP(\bar{v})P(\beta) + P\big(aP(\bar{v})\big)\beta \Big) \\
&= & P\Big[ a \Big( P\big( \bar{v}P(\beta)+P(\bar{v})\beta + \lambda \bar{v}\beta \big) - P(\bar{v})P(\beta)  \Big)  +
     \Big( P(a)P(\bar{v})- P\big( \lambda a\bar{v}+  P(a)\bar{v} + aP(\bar{v}) \big) \Big) \beta \Big]    \\
&\stackrel{(IV)}{=} & P(a0+0\beta)=0.
\end{eqnarray*}
}
In the above calculation, equation $(I)$ follows from the definition of $P$. Equation $(II)$ follows from the induction hypothesis because $\bar{v}$, $a\bar{v}$ and $P(a)\bar{v}$ are in  $S^{-1}A\otimes V^{\otimes {(n-1)}}$. Equation $(III)$ follows from Lemma \ref{mn=0} and the fact that $P$ is $\bfk$-linear. Equation $(IV)$ follows from the induction hypothesis because $\bar{v}\in  S^{-1}A\otimes V^{\otimes {(n-1)}}$.  So $I_1=I_3=0$. Now it is easy to see that $I_2=0$  because it is symmetric to $I_3=0$.

Thus we have proved that $P$ is a Rota-Baxter operator and hence $(B, P_B)$ is a commutative Rota-Baxter algebra.
\smallskip

\noindent
{\bf Step 3. $f$ is a Rota-Baxter isomorphism.}
We first prove a lemma.
\begin{lemma} $B$ is generated by $S^{-1}A$ as a Rota-Baxter algebra. \mlabel{Lemma: Generators}
\end{lemma}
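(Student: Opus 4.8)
The plan is to prove $B'=B$, where $B'$ denotes the Rota-Baxter subalgebra of $(B,P)$ generated by $S^{-1}R$ (with $P=P_B$ as defined in Step~1). Since $B'$ is a $\bfk$-submodule and $B=S^{-1}R\oplus\bigoplus_{k\geq 1}(S^{-1}R\otimes V^{\otimes k})$ is spanned over $\bfk$ by the pure tensors $s_0\otimes v_1\otimes\cdots\otimes v_k$ with $s_0\in S^{-1}R$ and $v_1,\dots,v_k\in V$, it is enough to show that each such tensor lies in $B'$. I would argue by induction on the index $k$ of the summand $S^{-1}R\otimes V^{\otimes k}$ to which the tensor belongs. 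The base case $k=0$ is immediate, as $s_0\in S^{-1}R\subseteq B'$.

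For the inductive step I would exploit two facts, one visible from Step~1 and one from the multiplication rule. First, the defining formula for $P$ shows that on the ``all-$V$'' summands the operator $P$ is unchanged from the free operator, i.e. $P(v_1\otimes\cdots\otimes v_k)=1\otimes v_1\otimes\cdots\otimes v_k$ whenever $v_1,\dots,v_k\in V$. Second, the $m=0$ case of Eq.~(\ref{eq:mshprod1}) says that multiplying a length-one element $s_0\in S^{-1}R$ by a tensor whose first slot is $1$ simply reinstalls $s_0$ in that slot: $s_0\diamond(1\otimes v_1\otimes\cdots\otimes v_k)=(s_0\cdot 1)\otimes v_1\otimes\cdots\otimes v_k=s_0\otimes v_1\otimes\cdots\otimes v_k$. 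Now for $k\geq 1$ the tail $v_1\otimes\cdots\otimes v_k$ lies in $S^{-1}R\otimes V^{\otimes(k-1)}$ (its leading factor $v_1\in V\subseteq S^{-1}R$), hence belongs to $B'$ by the induction hypothesis; closure of $B'$ under $P$ then gives $1\otimes v_1\otimes\cdots\otimes v_k=P(v_1\otimes\cdots\otimes v_k)\in B'$; and closure of $B'$ under the product, together with $s_0\in S^{-1}R\subseteq B'$, gives $s_0\otimes v_1\otimes\cdots\otimes v_k=s_0\diamond P(v_1\otimes\cdots\otimes v_k)\in B'$. This finishes the induction, and hence $B'=B$.

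The only delicate point, and the one I would flag as the main obstacle, is the construction of an arbitrary leading factor. Because $P$ on $B$ is twisted and no longer the restriction of $P_{\sha(S^{-1}R)}$ off the all-$V$ summands, one cannot manufacture a tensor with a prescribed first slot by iterating $P$ alone: each application of $P$ merely prepends a $1$, and the defining identity for $P$ only produces $P_R(a)\otimes\bar v$ rather than $a\otimes\bar v$ for general $a\in R$. The resolution is to separate the two roles---use a single application of $P$, where it acts freely, to create the leading $1$ on the tail, and then use one mixable-shuffle multiplication by the length-one element $s_0$ to replace that $1$ by the desired arbitrary element of $S^{-1}R$. No computation beyond Step~1 and Eq.~(\ref{eq:mshprod1}) is required.
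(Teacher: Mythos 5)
Your proposal is correct and is essentially the paper's own argument: both proceed by induction on $k$, observing that for $\bar v\in V^{\otimes k}$ the twisted operator still satisfies $P(\bar v)=1\otimes\bar v$, and then recovering $s_0\otimes\bar v$ as $s_0\diamond P(\bar v)$ using the length-one multiplication rule. Your added remark about why one must combine a single application of $P$ with a shuffle multiplication (rather than iterate $P$) is a fair elaboration of the same idea, not a different route.
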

\begin{proof} Let $C$ be the Rota-Baxter subalgebra of $B$ generated by $S^{-1}A$. We need to show that for any $k\geq 0$, $S^{-1}A \otimes V^{k}$ is contained in $C$. We proof by induction on $k$. The case $k=0$ is trivial. For any $a\in S^{-1}A$ and $v\in V$, we have $P(v)\in C$. So $a\otimes v= aP(v)\in C$. This shows that $S^{-1}A\otimes V \subseteq C$. Now we assume that $S^{-1}A\otimes V^{k}\subseteq C$, then for any $a\in S^{-1}A$ and $\bar{v}\in V^{k+1}$, we have $\bar{v}\in S^{-1}A\otimes V^{k}\subseteq C$ and hence $P(\bar{v})\in C$. So $a\otimes \bar{v}= aP(\bar{v})\in C$. This proves that $S^{-1}A\otimes V^{k+1}\subseteq C.$
\end{proof}
It is easy to check that $f$ defined in Eq.~(\mref{eq:locmap}) is a morphism of Rota-Baxter algebras. There is another morphism of Rota-Baxter algebras
$$g: (A, P_A) \longrightarrow (B, P_B); \, a \mapsto a \in S^{-1}A \subseteq B, a\in A, $$
where we identify $a\in A$ with its image in $S^{-1}A$. Since the image of $S\subseteq A$ in $S^{-1}A$ and hence in $B$ is invertible, by the universal property of $(S^{-1}_{RB}A,S^{-1}P_A)$, there is a Rota-Baxter algebra  morphism $h: (S^{-1}_{RB}A, S^{-1}P_A) \to (B, P_B)$ which fits into the following commutative diagram.

$$
\xymatrix{
   (A, P_A) \ar[r]^-i \ar[dr]^-g  \ar[ddr]_i & (S^{-1}_{RB}A, S^{-1}P_A) \ar[d]^h \\
                                   & (B, P_B) \ar[d]^f \\
                                   & (S^{-1}_{RB}A, S^{-1}P_A)
}$$
By the universal property of localizations, $ f\circ h$ is the identity map of $(S^{-1}_{RB}A, S^{-1}P_A)$. So $h$ is injective. By the commutative diagram we see that the image of $h$ is a Rota-Baxter subalgebra of $B$ containing $S^{-1}A$.
By Lemma ~\mref{Lemma: Generators}, $(B, P_B)$ is generated by $S^{-1}A$ as a Rota-Baxter algebra. So the image of $h$ coincides with $B$, i.e., $h$ is also surjective. This proves that both $f$ and $h$ are isomorphisms and hence the theorem.            \end{proof}

\subsection{The weight zero case}

Let $(A, P)$ be a commutative Rota-Baxter algebra of weight $\lambda$ as before. Theorem \mref{thm:locnzero} gives concrete algebra structure of the Rota-Baxter localizations $S^{-1}_{RB}A$ under the condition that $S^{-1}A=A\oplus V$ where $V$ is closed under multiplication. If the weight $\lambda$ happens to be zero, the same result holds without requiring  that $V$ is closed under multiplication. Note that, if $\lambda=0$, the shuffle product on $S^{-1}R\oplus \bigoplus_{k\geq 1} (S^{-1}R \ot V^{\ot k})$ does not require any multiplicative structure on $V$.

\begin{theorem}
Let$A $ be a commutative $\bfk$-algebra and let $S$ be a multiplicative subset of $A$. Suppose $S^{-1}A=A \oplus V$ where $V$ is a $\bfk$-submodule.
Then
$$S^{-1}_\rb A = S^{-1}A\oplus \bigoplus_{k\geq 1} (S^{-1}A \ot V^{\ot k})$$
with the extended shuffle product. In other words, we have the tensor product algebra
$$ S^{-1}_\rb A = S^{-1}A \ot \Big(\bigoplus_{k\geq 0} V^{\ot k}\Big)
$$
where the second tensor factor is the shuffle product algebra on $V$.
\mlabel{thm:loczero}
\end{theorem}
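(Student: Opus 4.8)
The plan is to follow the proof of Theorem~\ref{thm:locnzero} almost verbatim, specializing to weight $\lambda=0$, and to isolate the single place where the hypothesis that $V$ be a subring was used so as to see that it is precisely a $\lambda$-term that now vanishes. First I would record that at weight zero the mixable shuffle product on $\sha(S^{-1}A)$ degenerates to the ordinary shuffle product: in a product $\mathfrak{a}\diamond\mathfrak{b}$ only the two leading tensor factors are multiplied (giving $a_0 b_0$), while the remaining factors are merely interleaved and are never multiplied against one another. Consequently, if $\mathfrak{a}\in S^{-1}A\ot V^{\ot m}$ and $\mathfrak{b}\in S^{-1}A\ot V^{\ot n}$, then $a_0 b_0\in S^{-1}A$ and every shuffled tail lies in $V^{\ot(m+n)}$, so the product lands in $S^{-1}A\ot V^{\ot(m+n)}$. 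Thus $B:=S^{-1}A\oplus\bigoplus_{k\geq 1}(S^{-1}A\ot V^{\ot k})$ is a subalgebra of $\sha(S^{-1}A)$ for the shuffle product, with no requirement that $V$ be closed under multiplication. This is the structural reason the subring hypothesis can be dropped.

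Next I would equip $B$ with the Rota-Baxter operator $P$ defined by the same inductive recipe as in Theorem~\ref{thm:locnzero} but with $\lambda=0$: set $P(a)=P_A(a)$ for $a\in A$, $P(v)=1\ot v$ for $v\in V$, $P(\bar v)=1\ot\bar v$ for $\bar v\in V^{\ot(k+1)}$, and $P(a\ot\bar u)=P(a)\ot\bar u - P(P(a)\bar u)$ for $a\in A$ and $\bar u\in V^{\ot k}$. I would then verify that $P$ is a weight-zero Rota-Baxter operator by the same double induction on $m+n$ used in Theorem~\ref{thm:locnzero}: decompose a general pair of elements as $\alpha=(a+v)\ot\bar v$ and $\beta=(b+u)\ot\bar u$, write the Rota-Baxter defect as $I_1+I_2+I_3+I_4$, dispose of the base case $mn=0$ exactly as in Lemma~\ref{mn=0}, and run the inductive step. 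The calculations are the $\lambda=0$ specializations of those already carried out, so they are routine; the point to emphasize is that every manipulation in the earlier proof that secretly multiplied two elements of $V$ occurred inside a factor of $\lambda$, and therefore every such term now drops out. In particular $I_4=0$ holds simply because $P$ restricts to $P_{\sha(S^{-1}A)}$ on $\bigoplus_{n\geq 1}V^{\ot n}$, and the shuffle product keeps this computation inside $B$ with no closure assumption on $V$.

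Finally I would show that the natural map $f\colon B\to S^{-1}_\rb A$ of Eq.~(\ref{eq:locmap}) is a Rota-Baxter isomorphism, exactly as in Step~3 of Theorem~\ref{thm:locnzero}. I would first establish the analogue of Lemma~\ref{Lemma: Generators}, that $B$ is generated by $S^{-1}A$ as a Rota-Baxter algebra, using the identity $a\ot\bar v=a\,P(\bar v)$ and induction on tensor length. Then the universal property of the localization produces a Rota-Baxter morphism $h\colon S^{-1}_\rb A\to B$ splitting $f$, so that $f\circ h=\id$ forces $h$ to be injective; since the image of $h$ is a Rota-Baxter subalgebra containing the generating set $S^{-1}A$, it is all of $B$, whence $h$ is surjective and both $f$ and $h$ are isomorphisms. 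The asserted tensor-product reformulation $S^{-1}_\rb A = S^{-1}A\ot\big(\bigoplus_{k\geq 0}V^{\ot k}\big)$ is then just a restatement of this isomorphism.

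The main obstacle, such as it is, is not the induction but the bookkeeping behind it: one must check that at no point does the shuffle product or the operator $P$ feed a product of two $V$-entries back into $B$, for that is exactly the step that would fail without the subring hypothesis. Once the weight-zero degeneration of the shuffle is made explicit at the outset, this is automatic, and the remainder is a transcription of the weight-$\lambda$ proof with all $\lambda$-terms erased.
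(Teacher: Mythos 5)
Your proposal is correct and follows essentially the same route as the paper: the authors likewise observe that at weight zero the mixable shuffle degenerates to the ordinary shuffle (so no multiplicative structure on $V$ is needed for $B$ to be a subalgebra), define the same inductive operator $P(a\ot\bar v)=P_A(a)\ot\bar v-P(P(a)\bar v)$, and then declare that the verification and the isomorphism argument proceed exactly as in Theorem~\ref{thm:locnzero}. Your write-up simply makes explicit the bookkeeping that the paper leaves as ``one checks easily.''
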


As in the nonzero weight case, we note that the right hand side is a subalgebra, but not a Rota-Baxter subalgebra,  of $\sha(S^{-1}A)$.

\begin{proof}
Let $B=S^{-1}A\oplus \big( \bigoplus\limits_{k\geq 1} S^{-1}A \ot V^{\ot k}\big)$ be the algebra in the theorem.  We need to show that the composition map
\begin{equation}
f: B \hookrightarrow \sha(S^{-1}A) \to S^{-1}_\rb A
\mlabel{eq:comp}
\end{equation} is bijective. We proceed by first defining an Rota-Baxter operator $P=P_B$ on $B$ such that $B$ is generated by $S^{-1}A$ and there is a Rota-Baxter algebra morphism $g: (A, P_A)\to (B, P_B)$. By the universal property of localization,  $g$ induces a Rota-Baxter algebra morphism $h: (S^{-1}_{RB}A, S^{-1}P) \to (B, P_B)$. We then check that $h$ is the inverse of $f$.

We inductively define an operator $P=P_B: B \to B$ by assigning
\begin{center}\begin{tabular}{l}
$P(a)=P_A(a)$ \quad where $a\in A$\\
$P(\bar{v})= 1\otimes \bar{v}$ \quad where $\bar{v}\in V^{\otimes n}$\\
$P(a\otimes \bar{v}) = P_A(a)\otimes \bar{v}-P(P(a)\bar{ v})=P(a)P(\bar{v})-P(P(a)\bar{ v}) $ \quad where $a$ and $\bar {v}$ as above.
\end{tabular}
\end{center}
One checks easily that $P$ is the desired Rota-Baxter operator on $B$. The proof now goes exactly as the proof of Theorem ~\mref{thm:locnzero}.
\end{proof}

\begin{exam}
{\rm
Let $A=\bfk[x]$ be the ring of polynomials of one variable over a field $\bfk$ of character $0$. Let $\bfk(x)$ be the field of fractions of $A$.
Let $P$ be the integral operator which sends $x^n$ to $P(x^n):L=x^{n+1}/(n+1)$. So $P$ is a Rota-Baxter operator of weight zero. Let $S\subseteq A$ be the multiplicative subset $\{x^{n} \big\vert n\geq 1\}$. Clearly $S^{-1}A=\bfk[x, x^{-1}]=A \oplus V\subseteq \bfk(x)$ where $V$ is the $\bfk$-module $x^{-1}\bfk[x^{-1}]$. By Theorem ~\mref{thm:loczero}, the Rota-Baxter localization of $A$ at $S$ is given as the tensor algebra
$$\bfk[x,x^{-1}] \otimes \Big(\bigoplus\limits_{k\geq 0}(x^{-1}\bfk[x^{-1}])^{\otimes k} \Big), $$ where the multiplication in $\bigoplus\limits_{k\geq 0}(x^{-1}\bfk[x^{-1}])^{\otimes k}$ is given by the shuffle product $\sha$.
}
\end{exam}

\subsection{Localization of Rota-Baxter algebras with the zero Rota-Baxter operator}
Let $A$ be an $\bfk-$algebra. Then $A$ is naturally a Rota-Baxter algebra with the zero Rota-Baxter operator that sends elements of $A$ to the zero element of $A$. In fact, this defines a functor which embeds the category $\Comm/\bfk$ as a full faithful subcategory of  $\CRB/\bfk$. In this section, we study the Rota-Baxter localization of $A$ when $A$ has the zero Rota-Baxter operator. We assume that the weight of the operator is zero.

To simplify notations, we identify $a\in A$ with its image in the usual localization $S^{-1}A$.

\begin{theorem}
Let $A$ be a Rota-Baxter $\bfk$-algebra with the zero Rota-Baxter operator (of weight zero). Let $S$ be a multiplicative subset of $A$ and let $V$ be the $\bfk$-module $S^{-1}A/A$. Then the Rota-Baxter localization $S^{-1}_\rb A$ of $A$ at $S$ in $\CRB/\bfk$, with weight zero, is given by
$$ S^{-1}A\oplus \left(\bigoplus_{k\geq 1} (S^{-1}A \ot V^{\ot k})\right)$$
with the quotient ring structure from $\sha (S^{-1}A)$. In other words, we have the tensor product algebra
$$ S^{-1}_\rb A = S^{-1}A \ot \Big(\bigoplus_{k\geq 0} V^{\ot k}\Big)
$$
where the second tensor factor is the shuffle product algebra on $V$.
\mlabel{thm:loczeroweightzerooperation}
\end{theorem}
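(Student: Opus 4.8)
The plan is to realize $S^{-1}_\rb A$ as an explicit quotient of the free Rota-Baxter algebra and then identify that quotient with the stated module. By Theorem~\ref{thm:exist}, since $P_A=0$ the generating set $\{P_A(a)-1\ot a\}$ collapses to $\{-1\ot a\}$, so
$S^{-1}_\rb A=\sha(S^{-1}A)/I$, where $I$ is the Rota-Baxter ideal generated by $\{\,1\ot a \mid a\in\overline{A}\,\}$ and $\overline{A}$ is the image of $A$ in $S^{-1}A$. Write $W=S^{-1}A$ and use the weight-zero identification $\sha(W)\cong W\ot\big(\bigoplus_{k\ge0}W^{\ot k}\big)$, in which the product is the multiplication of $W$ on the leading factor tensored with the shuffle product on the remaining factors; this is an elementary consequence of the product formula \eqref{eq:mshprod2} at $\lambda=0$. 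The target module is $B:=W\ot\big(\bigoplus_{k\ge0}V^{\ot k}\big)$ with $V=W/\overline{A}$ and $q\colon W\to V$ the quotient. The whole theorem reduces to the single identity $I=\ker r$, where
\[
r:=\mathrm{id}_W\ot\mathrm{Sh}(q)\colon\ \sha(W)=W\ot\Big(\bigoplus_{k\ge0}W^{\ot k}\Big)\ \longrightarrow\ W\ot\Big(\bigoplus_{k\ge0}V^{\ot k}\Big)=B,\qquad \mathrm{Sh}(q)=\bigoplus_{k\ge0}q^{\ot k},
\]
reduces every tensor factor after the first modulo $\overline{A}$. Granting $I=\ker r$, the map $r$ is a surjective algebra homomorphism (the shuffle product is functorial and $q$ is onto), so $B\cong\sha(W)/I=S^{-1}_\rb A$ carries the asserted quotient product, which is exactly the shuffle product on the $V$-factors, and the operator $\overline{P_{\sha(W)}}$ transports across $r$.

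First I would prove $I\subseteq\ker r$ by checking that $\ker r$ is a Rota-Baxter ideal containing the generators. Since $r$ is an algebra homomorphism, $\ker r$ is an ideal; it contains each $1\ot a$ because $q(a)=0$ for $a\in\overline{A}$; and it is closed under $P_{\sha(W)}$ because $r\circ P_{\sha(W)}$ reduces \emph{all} factors modulo $\overline{A}$ (not merely those after the first), and this factors through $r$ via $q^{\ot(k+1)}=(q\ot\mathrm{id})\circ(\mathrm{id}\ot q^{\ot k})$; hence $r(\xi)=0$ forces $r(P_{\sha(W)}\xi)=0$. As $I$ is the smallest Rota-Baxter ideal containing the generators, $I\subseteq\ker r$ follows.

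The reverse inclusion $\ker r\subseteq I$ is the heart of the argument. By right-exactness of $\ot_\bfk$, $\ker r$ is spanned by the elements $w_0\ot(w_1\ot\cdots\ot w_k)$ for which at least one of $w_1,\dots,w_k$ lies in $\overline{A}$. Fixing such a factor $w_j=a\in\overline{A}$, I would show membership in $I$ by induction on $r=k-j$, the number of tensor slots to the right of $a$. In the base case $r=0$ (with $a$ in the last slot) one builds the element directly from the generator $1\ot a\in I$ by alternately rescaling the leading factor (multiplication by the degree-one element of $W$, legal since $I$ is an ideal) and applying $P_{\sha(W)}$ (legal since $I$ is closed under $P$), which prepends $w_{j-1},\dots,w_1$ one at a time and finally installs the head $w_0$. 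For the inductive step I would multiply the already-constructed element $w_0\ot(w_1\ot\cdots\ot w_{j-1}\ot a)\in I$ by $1\ot(w_{j+1}\ot\cdots\ot w_k)$; the shuffle product expands as the desired word (the unshuffled concatenation) plus terms in which strictly fewer than $r$ factors sit to the right of $a$, and each such term still contains the $\overline{A}$-factor $a$, hence lies in $I$ by the inductive hypothesis. Rearranging exhibits the target in $I$.

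The main obstacle is precisely this last inclusion. Because $V$ is only a quotient of $S^{-1}A$ and need not embed back—this is exactly why the direct operator-formula proof of Theorem~\ref{thm:loczero} is unavailable here, since one cannot choose compatible lifts—the combinatorial generation of $\ker r$ from the single family $\{1\ot a\}$ must be controlled entirely by the shuffle structure. The induction on the number of slots to the right of the distinguished $\overline{A}$-factor is what makes this shuffle ``leading term'' argument terminate. Once $I=\ker r$ is established, transporting $\overline{P_{\sha(W)}}$ across the induced isomorphism $\sha(W)/I\cong B$ yields the Rota-Baxter operator on $B$ and completes the proof.
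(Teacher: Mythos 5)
Your proposal is correct and follows essentially the same route as the paper: both identify $S^{-1}_\rb A$ with $\sha(S^{-1}A)/I$ for $I$ the Rota--Baxter ideal generated by $\{1\ot a\}$, identify the target module with $\sha(S^{-1}A)/J$ where $J$ (your $\ker r$) is the span of pure tensors having a tail factor in the image of $A$, and prove $I=J$ by checking $J$ is a Rota--Baxter ideal containing the generators and then running a shuffle-expansion induction for $J\subseteq I$. Your packaging of $J$ as the kernel of the algebra homomorphism $r$ and your choice of induction parameter (slots to the right of the distinguished $A$-factor, versus the paper's induction on tail length with a case split on position) are only cosmetic variations on the paper's argument.
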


\begin{proof}
Consider the following subset of $\sha (S^{-1}A)$,
$$\begin{array}{ccl}J & = & \bigoplus\limits_{k\geq 1}^\infty\big(\sum\limits_{i=1}^n (S^{-1}A)^{\ot i}\ot A \ot (S^{-1}A)^{\ot (n-i)} \big) \\
                      & = & (S^{-1}A\ot A) \bigoplus  \left((S^{-1}A)^{\ot 2}\ot A + S^{-1}A \ot A \ot S^{-1}A\right) \bigoplus \cdots
\end{array}$$
Then we have
$$ S^{-1}A \oplus \left(\bigoplus_{k\geq 1} (S^{-1}A\ot V^{\ot k})\right) = \sha(S^{-1}A)/J.$$
On the other hand, let $I$ denote the Rota-Baxter ideal generated by $\{1\ot a | a\in A\}$. Since the Rota-Baxter operator is zero, by Theorem \mref{thm:exist}, we have $S^{-1}_\rb A = \sha (S^{-1}A) /I$. Thus to prove the theorem, we only need to prove the following lemma.

\begin{lemma} \begin{enumerate}
\item $J$ is a Rota-Baxter ideal of $\sha (S^{-1}A). $
\mlabel{it:Jideal}
\item
$I=J.$
\mlabel{it:equal}
\end{enumerate}
\end{lemma}
\begin{proof}
(\mref{it:Jideal}). $J$ is obviously a $\bfk$-module and closed under the Rota-Baxter operator of $\sha (S^{-1}A)$. It remains to check that
$J$ is a Rota-Baxter ideal of $\sha(S^{-1}A)$.
Note that $J$ is $\bfk$-linearly generated by pure tensors. Thus to prove that $\fraka \frakb \in J$ for any $\fraka\in J$ and $\frakb\in \sha(S^{-1}A)$, we only need to prove it for pure tensors $\fraka:=a_0\ot a_1 \ot \cdots \ot a_m\in J$ and $\frakb:=b_0\ot b_1\ot \cdots \ot b_n\in \sha(S^{-1}A) $, where $a_i, b_j$ are elements in $S^{-1}A$ and, for some $1\leq j_0\leq m$, $b_{j_0}$ is in $A$.
Since the weight of the Rota-Baxter algebra is zero, we have
$$ \fraka \diamond \frakb = a_0b_0\ot \big((a_1\ot \cdots \ot a_m) \ssha (b_1\ot \cdots \ot b_n)\big),$$
where $\ssha$ is the shuffle product. Since a shuffle of $a_1\ot \cdots \ot a_m$ and $b_1\ot \cdots \ot b_n$ comes from performing a special permutation on the tensor factors of $a_1\ot \cdots \ot a_m\ot b_1\ot \cdots \ot b_n$, one tensor factor in the shuffle is in $A$, implying that this shuffle is in $J$. Hence $\fraka \diamond \frakb$ is in $J$.

\smallskip

\noindent
(\mref{it:equal}).
Since $\{1\ot a | a\in A\}$ is contained in $J$, we see that $I$ is a subset of $J$.
It remains to show that $J\subseteq I$, for which it suffices to show that $ (S^{-1}A)^{\ot i}\ot A \ot (S^{-1}A)^{\ot (n-i)} \subseteq I$ for any $n\geq 1$ and $1\leq i\leq n $. We prove this by induction on $n$. If $n=1$, then $i=1$. For any $a\in S^{-1}R$ and $b\in R$, we have
$$1\ot b\in I \Rightarrow a\ot b=a \diamond (1\ot b) \in I.$$
So the statement holds for $n=1$. In general, consider $a_1 \ot \cdots \ot a_i \ot a \ot a_{i+1}\ot\cdots \ot a_n\in (S^{-1}A)^{\ot i}\ot A \ot (S^{-1}A)^{\ot (n-i)}$, where $a\in A$. If $i\geq 2, $ then we have
$$a_1 \ot \cdots \ot a_i \ot a \ot a_{i+1}\ot\cdots \ot a_n =a_1\diamond P(a_2 \ot \cdots \ot a_i \ot a \ot a_{i+1}\ot\cdots \ot a_n)\in I $$ because  $a_2 \ot \cdots \ot a_i \ot a \ot a_{i+1}\ot\cdots \ot a_n\in I$ by induction assumption. If $i=1$, we see that
$$I\ni (1\ot a)\diamond (a_1\ot \cdots \ot a_n)=a_1\ot a \ot a_2  \ot \cdots \ot a_n + \sum\limits_{i=2}^{n} a_1 \ot \cdots \ot a_i \ot a \ot a_{i+1}\ot\cdots \ot a_n. $$
We have just showed that $\sum\limits_{i=2}^{n} a_1 \ot \cdots \ot a_i \ot a \ot a_{i+1}\ot\cdots \ot a_n\in I$, so $a_1\ot a \ot a_2  \ot \cdots \ot a_n\in I$ as desired.
\end{proof}

This completes the proof of Theorem~\mref{thm:loczeroweightzerooperation}.
\end{proof}

\section{Rota-Baxter tensor product and Rota-Baxter Zariski topology}
\mlabel{sec:site}
In this section we study the tensor product of commutative Rota-Baxter algebras. We then extend the concepts of Zariski coverings and Zariski topology to the category of commutative Rota-Baxter algebras.

\subsection{Tensor products of Rota-Baxter algebras}
We construction the tensor product (coproduct) of two Rota-Baxter algebras in the category of Rota-Baxter algebras. Note that this is different from the tensor product in the category of algebras.

Let $\CRB/\bfk$ denote the category of commutative Rota-Baxter algebras of weight $\lambda$ over $\bfk$ as before.
\begin{defn} {\rm
Given a diagram of morphisms in $\CRB/\bfk$  $$(R_1, P_{R_1})\stackrel{f_1}{\longleftarrow}(R_0, P_{R_0}) \stackrel{f_2}{\longrightarrow}(R_2, P_{R_2}),$$ the colimit of the  diagram, if exists, is called the {\bf Rota-Baxter tensor product} of $(R_1,P_{R_1})$ and $(R_2,P_{R_2})$ over $(R_0,P_{R_0})$ and is denoted by $(R_1 \rbot_{R_0} R_2, P_{R_1} \ot_{P_{R_0}}  P_{R_2})$ or simply $R_1 \rbot_{R_0} R_2$.
In more concrete terms, let $f_i:(R_0,P_0)\to (R_i,P_i), i=1,2,$ be homomorphisms of commutative Rota-Baxter algebras. Their Rota-Baxter tensor product is a commutative Rota-Baxter algebra $(R_1 \rbot_{R_0} R_2, P_{R_1} \ot_{P_{R_0}}  P_{R_2})$ with Rota-Baxter algebra homomorphisms $k_i:(R_i,P_{R_i})\to (R_1 \rbot_{R_0} R_2, P_{R_1} \ot_{P_{R_0}}  P_{R_2}), i=1,2,$ such that, for any commutative Rota-Baxter algebra $(R,P_R)$ and Rota-Baxter algebra homomorphisms $\psi_i:(R_1,P_{R_1})\to (R,P_R), i=1,2,$ there is a unique morphism $$\psi_{1,2}:(R_1 \rbot_{R_0} R_2, P_{R_1} \ot_{P_{R_0}}  P_{R_2})\to (R,P_R)$$ such that $\psi_i=\psi_{1,2}\circ k_i, i=1,2$.
}
\end{defn}

Our construction of the Rota-Baxter tensor product will be based on free commutative Rota-Baxter algebras. We first give some properties of these algebras.

\begin{lemma}
\begin{enumerate}
\item
Let $(R,P_R)$ be a commutative Rota-Baxter algebra and let $\phi_R:\sha(R)\to R$ be the surjective Rota-Baxter algebra homomorphism  induced by the identity map  on $R$ and the freeness of $\sha(R)$. Let $j_R$ be the natural algebra morphism $R \to \sha (R)$. Then
\begin{equation}
\phi_R\circ j_R=\id_{R},
\mlabel{eq:id}
\end{equation}
and
$\ker \phi_R$ is the Rota-Baxter ideal of $\sha(R)$ generated by the set $\{P_R(r)-1\ot r\,|\, r\in R\}$.
\mlabel{it:rideal}
\item
For a homomorphism $f:R\to S$ of commutative algebras $R$ and $S$, let $\sha(f):\sha(R)\to \sha(S)$ denote the Rota-Baxter algebra homomorphism induced by the algebra homomorphism $j_S\circ f:R\to \sha(S)$. Let Let $j_S$ be the natural map $S \to \sha (S)$. Then $j_S\circ f=\sha(f)\circ j_R$ and, for $r_0\ot \cdots \ot r_k\in \sha(R)$, we have
$$ \sha(f)(r_0\ot \cdots \ot r_k)=f(r_0)\ot \cdots \ot f(r_k).$$
\mlabel{it:hom}
\end{enumerate}
\mlabel{lem:pref}
\end{lemma}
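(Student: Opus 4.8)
```latex
The plan is to prove the two parts of Lemma~\ref{lem:pref} directly from the universal property of the free commutative Rota-Baxter algebra $\sha(R)$, which is the adjoint to the forgetful functor $\mathrm{F}$ by \cite[Corollary 4.3]{G-K1}. For part~(a), the map $\phi_R\colon \sha(R)\to R$ is by definition the unique Rota-Baxter algebra homomorphism extending the identity $R\to R$ along the adjunction unit $j_R$. Evaluating the adjunction, $\phi_R\circ j_R=\id_R$ is exactly the statement that $\phi_R$ extends the identity, which gives Eq.~(\ref{eq:id}) for free. The interesting content is the identification of $\ker\phi_R$. Here I would invoke Theorem~\ref{thm:exist} (or rather its proof) in the degenerate case $S=\{1\}$: taking $S$ to be the trivial multiplicative set makes $S^{-1}R=R$, so $\sha(S^{-1}R)=\sha(R)$ and the localization $S^{-1}_\rb R$ is just $R$ itself, presented as $\sha(R)$ modulo the Rota-Baxter ideal generated by $\{P_R(a)-1\ot a\mid a\in R\}$. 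Since the structure map $i$ of that theorem coincides with $\phi_R$ (both are the identity on $R$ extended through the quotient), the kernel of $\phi_R$ is precisely this Rota-Baxter ideal.

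For part~(b), the map $\sha(f)\colon\sha(R)\to\sha(S)$ is defined as the unique Rota-Baxter algebra homomorphism extending the algebra map $j_S\circ f\colon R\to\sha(S)$ along $j_R$. The identity $j_S\circ f=\sha(f)\circ j_R$ is again just the defining property of this extension. The explicit formula $\sha(f)(r_0\ot\cdots\ot r_k)=f(r_0)\ot\cdots\ot f(r_k)$ I would establish by induction on the tensor degree $k$, using the fact that every pure tensor is built from degree-zero elements by repeated application of the Rota-Baxter operator: indeed $r_0\ot\cdots\ot r_k=r_0\diamond P_{\sha(R)}(r_1\ot\cdots\ot r_k)$, where $r_0$ is viewed in degree zero. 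The base case $k=0$ is the equation $\sha(f)\circ j_R=j_S\circ f$ already noted, which says $\sha(f)(r_0)=f(r_0)$. For the inductive step, since $\sha(f)$ is a Rota-Baxter algebra homomorphism it commutes with the Rota-Baxter operators, so
\begin{align*}
\sha(f)(r_0\ot\cdots\ot r_k)
 &= \sha(f)\big(r_0\diamond P_{\sha(R)}(r_1\ot\cdots\ot r_k)\big)\\
 &= f(r_0)\diamond P_{\sha(S)}\big(\sha(f)(r_1\ot\cdots\ot r_k)\big)\\
 &= f(r_0)\diamond P_{\sha(S)}\big(f(r_1)\ot\cdots\ot f(r_k)\big)\\
 &= f(r_0)\ot f(r_1)\ot\cdots\ot f(r_k),
\end{align*}
where the second equality uses multiplicativity and compatibility of $\sha(f)$ with the operator, the third uses the induction hypothesis, and the last uses the defining formula~(\ref{eq:rbo}) for $P_{\sha(S)}$.

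I expect the main obstacle, to the extent there is one, to lie entirely in part~(a): namely making precise that the universally-defined $\phi_R$ really does coincide with the localization map $i$ of Theorem~\ref{thm:exist} at the trivial multiplicative set, so that the kernel description transfers cleanly. Part~(b) is essentially a bookkeeping induction once one observes that pure tensors are generated from scalars by $P_{\sha(R)}$ and that $\sha(f)$ is by construction a morphism of Rota-Baxter algebras; the only care needed is in the degree-zero base case, which is handled by the adjunction identity. Both parts are formal consequences of the adjointness of $(\sha,\mathrm{F})$, so no genuinely hard computation should arise.
```
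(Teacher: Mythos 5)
Your proposal is correct, and part (b) is essentially the paper's own proof: the paper likewise uses the identity $r_0\ot\cdots\ot r_k=r_0P(r_1P(\cdots P(r_k)\cdots))$ in $\sha(R)$ and pushes it through $\sha(f)$ using that $\sha(f)$ is a morphism of Rota-Baxter algebras; your version merely packages the same observation as an explicit induction on the tensor degree, with the base case supplied by $\sha(f)\circ j_R=j_S\circ f$. The genuine divergence is in the kernel statement of part (a). The paper argues directly: from $\phi_R\circ j_R=\id_R$ it gets the module splitting $\sha(R)=R\oplus\ker\phi_R$, notes $\cali_R\subseteq\ker\phi_R$, and then proves $\sha(R)=R+\cali_R$ by an explicit induction on tensor degree, writing each pure tensor as $r_0P_R(r_1P_R(\cdots P_R(r_k)\cdots))$ plus a telescoping sum of elements of $\cali_R$; the two decompositions then force $\ker\phi_R=\cali_R$. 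You instead specialize Theorem~\ref{thm:exist} to $S=\{1\}$, where the Rota-Baxter localization is $(R,P_R)$ itself, so that $\sha(R)/\cali_R\cong R$ compatibly with the maps from $R$. This is a legitimate and shorter route --- Theorem~\ref{thm:exist} precedes this lemma and its proof does not invoke it, so there is no circularity --- and it buys you a proof with essentially no computation. What it costs is exactly the point you flag as the main obstacle: $i$ and $\phi_R$ are maps between different objects, so the precise statement needed is that the unique isomorphism $u:\sha(R)/\cali_R\to R$ with $u\circ i=\id_R$ satisfies $u\circ\pi=\phi_R$, where $\pi:\sha(R)\to\sha(R)/\cali_R$ is the quotient map; this follows from the uniqueness clause in the freeness of $\sha(R)$, since $u\circ\pi$ is a Rota-Baxter homomorphism with $u\circ\pi\circ j_R=u\circ i=\id_R$, and then $\ker\phi_R=\ker\pi=\cali_R$ because $u$ is injective. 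With that half-sentence made explicit your argument is complete. The paper's computational route has the side benefit of exhibiting the explicit decomposition of a pure tensor into its $R$-component and its $\cali_R$-component, but nothing later in the paper appears to require that extra information.
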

\begin{proof}
(\mref{it:rideal}). Eq.~(\mref{eq:id}) follows from the freeness of $\sha(R)$. We next prove the statement on $\ker\phi_R$. By Eq.~(\mref{eq:id}), we have the splitting
$\sha(R)=R\oplus \ker \phi_R,$ where $R$ is identified with the image of $j_R$. Let $\cali_R$ denote the ideal of $\sha(R)$ generated by the set $\{P_R(r)-1\ot r\,|\,r\in R\}$.
Since $\ker \phi_R$ contains this set, we have $\ker\phi_R\supseteq
\cali_R$. We just need to prove
$$\sha(R)=R+ \cali_R.$$
For this, we just need to show that, any pure tensor $\frakr:=r_0\ot\cdots \ot r_k\in \sha(R)$ with $r_0,\cdots, r_k\in R$ is in $R+ \cali_R$. If $k=0$, then $\frakr$ is in $R$ and we are done. If $k\geq 1$, then we have
$$ \frakr=r_0\ot r_1\ot \cdots \ot r_k=r_0P_R(r_1P_R(\cdots P_R(r_k)\cdots)) -\big(r_0P_R(r_1P_R\cdots P_R(r_k)\cdots)-r_0\ot r_1\ot \cdots \ot r_k\big).$$
The first term is in $R$. We next use induction on $k\geq 1$ to prove the claim that the second term on the right hand side of the above equation is in $\cali_R$.

The claim holds when $k=1$ since
$r_0P_R(r_1)-r_0\ot r_1 = r_0(P_R(r_1)-1\ot r_1)$ is clearly in $\cali_R$. Suppose the claim has been proved for $k$ and consider the case of $k+1$. Then
\begin{eqnarray*}
&&r_0P_R(r_1P_R(\cdots P_R(r_{k+1})\cdots ))-r_0\ot r_1\ot\cdots \ot r_{k+1}
\\
&=&(r_0P_R(r_1P_R(\cdots P_R(r_{k+1})\cdots )) - r_0\ot (r_1P_R(\cdots P_R(r_{k+1})\cdots)))
\\&&+ r_0\ot\big( r_1P_R(\cdots P_R(r_{k+1})\cdots)-r_1\ot \cdots \ot r_{k+1}\big).
\end{eqnarray*}
On the right hand of the equation, the term in the first line is in $\cali_R$ by the case of $k=1$ and the term in the second line is in $\cali_R$ by the induction hypothesis. Hence the induction is completed.
\smallskip

\noindent
(\mref{it:hom}).
Note that in any free Rota-Baxter algebra $(\sha(A), P)$, by the definition of multiplication and $P$, we have the following important property
$$P(a_0\ot a_1\ot \cdots \ot a_n)=a_0P(a_1P(\cdots P(a_n)\cdots), \quad a_i\in A.$$
Then (\mref{it:hom}) follows since
\begin{eqnarray*}
\sha(f)(r_0\ot \cdots \ot r_k)&=&\sha(f)(r_0P_R(\cdots P_R(r_k)\cdots)) \\
&=&\sha(f)(r_0)P_S(\cdots P_S(\sha(f)(r_k) \cdots )) \\
&=& f(r_0)\ot \cdots \ot f(r_k).
\end{eqnarray*}
\end{proof}

We now establish the existence of the Rota-Baxter tensor product.
For a given diagram of morphisms in $\CRB/\bfk$
$$(R_1, P_{R_1})\stackrel{f_1}{\longleftarrow}(R_0, P_{R_0}) \stackrel{f_2}{\longrightarrow}(R_2, P_2),$$
let $R_1\ot_{R_0} R_2$ be the tensor product of $R_1$ and $R_2$ as $R_0$-algebras. Let $(\sha (R_1\ot_{R_0}R_2), P_{R_1\ot_{R_0}R_2} )$ be the free commutative Rota-Baxter algebra on $R_1\ot_{R_0}R_2$. For distinction, we use $ \Box $ to denote the tensor product in $R_1\ot_{R_0}R_2$ and use the usual $\otimes$ to denote the tensor product in $\sha (R_1\ot_{R_0}R_2)$.

\begin{theorem}
Let $\cali$ be the Rota-Baxter ideal of $\sha(R_1\ot_{R_0}R_2)$ generated by the set
$$ \big\{ P_{R_1}(r_1)\Box 1 - (1\Box1) \otimes (r_1\Box 1) \big\vert  r_1\in R_1 \big\}\cup
 \big\{1\Box P_{R_2}(r_2) - (1\Box1) \otimes (1\Box r_2) \big\vert  r_2\in R_2   \big\}.$$
Then the quotient Rota-Baxter algebra $\big( \sha (R_1\ot_{R_0}R_2)/\cali, \widetilde{P_{R_1\ot_{R_0}R_1}}\big)$
is the Rota-Baxter tensor product of $(R_1, P_{R_1})$ and $(R_2, P_{R_2})$ over $(R_0, P_{R_0})$.
\mlabel{thm:ten}
\end{theorem}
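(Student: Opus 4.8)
The plan is to transcribe the proof of Theorem~\mref{thm:exist}, with $\sha(R_1\ot_{R_0}R_2)$ in place of $\sha(S^{-1}R)$ and $\cali$ in place of $I_{S^{-1}R}$. Write $j\colon R_1\ot_{R_0}R_2\to\sha(R_1\ot_{R_0}R_2)$ for the degree-one inclusion and $\pi\colon\sha(R_1\ot_{R_0}R_2)\to\sha(R_1\ot_{R_0}R_2)/\cali$ for the quotient map, and let $\widetilde P$ denote the induced Rota-Baxter operator on the quotient. First I would define the structure maps $k_1\colon R_1\to\sha(R_1\ot_{R_0}R_2)/\cali$, $r_1\mapsto\pi(j(r_1\Box 1))$, and $k_2\colon R_2\to\sha(R_1\ot_{R_0}R_2)/\cali$, $r_2\mapsto\pi(j(1\Box r_2))$. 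These are plainly $\bfk$-algebra homomorphisms; that $k_1$ commutes with the Rota-Baxter operators follows from Eq.~(\mref{eq:rbo}), which here reads $P_{R_1\ot_{R_0}R_2}(r_1\Box 1)=(1\Box 1)\ot(r_1\Box 1)$, giving
$$ k_1(P_{R_1}(r_1))-\widetilde P(k_1(r_1))=\pi\big(P_{R_1}(r_1)\Box 1-(1\Box1)\ot(r_1\Box 1)\big)=0, $$
since the argument is a defining generator of $\cali$. The same computation, with the second family of generators, handles $k_2$. This is precisely the reason $\cali$ was given those two generating families.

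Next I would check the universal property. Let $(R,P_R)$ be a commutative Rota-Baxter algebra equipped with morphisms $\psi_i\colon(R_i,P_{R_i})\to(R,P_R)$ forming a cocone, i.e. $\psi_1\circ f_1=\psi_2\circ f_2$. The universal (pushout) property of $R_1\ot_{R_0}R_2$ in $\Comm/\bfk$ yields a unique algebra homomorphism $\phi\colon R_1\ot_{R_0}R_2\to R$ with $\phi(r_1\Box 1)=\psi_1(r_1)$ and $\phi(1\Box r_2)=\psi_2(r_2)$. By the freeness of $\sha(R_1\ot_{R_0}R_2)$, $\phi$ extends uniquely to a Rota-Baxter homomorphism $\widetilde\phi\colon\sha(R_1\ot_{R_0}R_2)\to(R,P_R)$ with $\widetilde\phi\circ j=\phi$. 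I then claim $\cali\subseteq\ker\widetilde\phi$. Since $\widetilde\phi$ is a Rota-Baxter homomorphism, $\ker\widetilde\phi$ is a Rota-Baxter ideal, so it is enough to test the generators: using $(1\Box1)\ot(r_1\Box1)=P_{R_1\ot_{R_0}R_2}(r_1\Box 1)$ together with the Rota-Baxter compatibility of $\widetilde\phi$ and of $\psi_1$,
$$ \widetilde\phi\big(P_{R_1}(r_1)\Box 1-(1\Box1)\ot(r_1\Box 1)\big)=\psi_1(P_{R_1}(r_1))-P_R(\psi_1(r_1))=0, $$
and symmetrically for the $R_2$-generators. Hence $\widetilde\phi$ factors through $\pi$, giving a unique Rota-Baxter homomorphism $\psi_{1,2}\colon\sha(R_1\ot_{R_0}R_2)/\cali\to(R,P_R)$ with $\psi_{1,2}\circ\pi=\widetilde\phi$, and $\psi_{1,2}\circ k_i=\psi_i$ holds by construction.

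For uniqueness I would repeat the final step of Theorem~\mref{thm:exist}. If $g$ is any Rota-Baxter homomorphism with $g\circ k_i=\psi_i$, then $g\circ\pi\colon\sha(R_1\ot_{R_0}R_2)\to(R,P_R)$ agrees with $\phi$ on the degree-one part $R_1\ot_{R_0}R_2$, which is generated as an algebra by the elements $r_1\Box 1$ and $1\Box r_2$; by the freeness of $\sha(R_1\ot_{R_0}R_2)$ this forces $g\circ\pi=\widetilde\phi=\psi_{1,2}\circ\pi$, and surjectivity of $\pi$ gives $g=\psi_{1,2}$. Equivalently, Proposition~\mref{prop:finteness} shows that $\sha(R_1\ot_{R_0}R_2)/\cali$ is generated as a Rota-Baxter algebra by $k_1(R_1)\cup k_2(R_2)$, on which $\psi_{1,2}$ is already prescribed.

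Since the whole argument is a line-by-line analogue of Theorem~\mref{thm:exist}, I expect no genuinely hard step. The only point needing care — and the hinge of the construction — is the identity $(1\Box1)\ot(r_1\Box1)=P_{R_1\ot_{R_0}R_2}(r_1\Box1)$ coming from Eq.~(\mref{eq:rbo}): it is this identification that makes the two generating families of $\cali$ simultaneously force $k_1,k_2$ to preserve the Rota-Baxter operators and force every compatible cocone $\widetilde\phi$ to annihilate $\cali$.
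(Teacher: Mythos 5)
Your argument is correct, and it reaches the conclusion by a noticeably more direct route than the paper does. You define $k_1,k_2$ explicitly on elements ($k_1(r_1)=\pi(j(r_1\Box 1))$, etc.) and then verify the universal property by hand, exactly in the style of Theorem~\mref{thm:exist}: a cocone $(\psi_1,\psi_2)$ induces $\phi$ on $R_1\ot_{R_0}R_2$ by the pushout property in $\Comm/\bfk$, $\phi$ extends to $\widetilde\phi$ on $\sha(R_1\ot_{R_0}R_2)$ by freeness, $\widetilde\phi$ annihilates the generators of $\cali$ because the $\psi_i$ intertwine the Rota--Baxter operators, and uniqueness follows from surjectivity of $\pi$ together with the fact that the degree-one part generates. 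The paper instead presents each $(R_i,P_{R_i})$ as the quotient $\sha(R_i)/\ker\phi_i$ via Lemma~\mref{lem:pref}, invokes \cite[Proposition 3.4]{Gudom} to realize $\sha(R_1\ot_{R_0}R_2)$ as the Rota--Baxter tensor product of the free objects $\sha(R_1)$ and $\sha(R_2)$ over $\sha(R_0)$, identifies $\cali$ with the Rota--Baxter ideal generated by $\sha(h_1)(\ker\phi_1)\cup\sha(h_2)(\ker\phi_2)$ (Lemma~\mref{lem:ideal}), and obtains $k_i$ only indirectly from the factorization $k_i\circ\phi_i=\phi_{1,2}\circ\sha(h_i)$, followed by a diagram chase (Lemmas~\mref{lem:kf} and \mref{lem:pek}). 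Your version is shorter and dispenses with Lemma~\mref{lem:ideal} and the large diagram; what the paper's version buys is the structural picture that the construction is a quotient of presentations, with the $k_i$ arising canonically rather than by an ad hoc formula. The one item you should add is the verification that $(k_1,k_2)$ is itself a cocone over the given diagram, i.e. $k_1\circ f_1=k_2\circ f_2$ (the content of the paper's Lemma~\mref{lem:kf}); with your element-level definition this is immediate from the identity $f_1(r_0)\Box 1=1\Box f_2(r_0)$ in $R_1\ot_{R_0}R_2$, but it must be stated, since without it the quotient is not even a candidate for the colimit.
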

\begin{proof}
We use the following diagram to organize the notations that will be introduced in the proof. To simplify the notations, for $i=0,1,2$, we let $P_i$ denote the Rota-Baxter operator $P_{R_i}:\sha(R_i)\to \sha(R_i)$ and let $j_i:R_i\to \sha(R_i)$ denote the natural embedding. Also denote $j_{1,2}:=j_{R_1\ot_{R_0}R_2}:R_1\ot_{R_0}R_2 \to \sha(R_1\ot_{R_0}R_2)$ for the natural embedding.

$$
{\small
\xymatrix{
R_0\ar^{j_{0}}[dr] \ar^{f_1}[rrrrrr]
    \ar_{f_2}[dddddd] &&&&&& R_1 \ar_{j_{1}}[dl]
    \ar^{h_1}[dddddd]\\
&     \sha(R_0) \ar^{\sha(f_1)}[rrrr]\ar^{\phi_0}[rd] \ar_{\sha(f_2)}[dddd] &&&& \sha(R_1)\ar_{\phi_1}[ld] \ar^{\sha(h_1)}[dddd] &\\
&    & R_0\ar^{f_1}[rr]\ar_{f_2}[dd] &&
            R_1 \ar_{\psi_1}[ld]\ar@/-1pc/_{k_1}[dd]&\\
   & && R &&& \\
 &   & R_2\ar^{\psi_2}[ru] \ar@/-1.7pc/[rr]^{k_2} && \sha(R_1\ot_{R_0}R_2)/\cali \ar_{\tilde{\free{\eta}}}[lu]&&\\
&     \sha(R_2)\ar^{\phi_2}[ru] \ar@/-3.3pc/^{\sha(h_2)}[rrrr]&&&& {\scriptsize \sha(R_1\ot_{R_0}R_2)}
     \ar_{\phi_{1,2}}[lu] \ar@/_2.5pc/_{\free{\eta}}[uull] & \\
    R_2\ar^{j_{2}}[ur] \ar^{h_2}[rrrrrr] &&&&&& R_1\ot_{R_0}R_2 \ar_{j_{1,2}}[ul] \ar@/^3pc/^{\eta}[uuulll]
}
}
$$

Let $h_i:R_i\to R_1\ot_{R_0}R_2, i=1,2,$ be the algebra homomorphisms to the tensor product algebra $R_1\ot_{R_0}R_2$. By \cite[Proposition 3.4]{Gudom}, $(\sha(R_1\ot_{R_0}R_2),P_{R_1\ot_{R_0}R_2})$ is the tensor product of the Rota-Baxter $(\sha(R_0),P_0)$-algebras $(\sha(R_1),P_{R_1})$ and $(\sha(R_2),P_{R_2})$ in the category of commutative Rota-Baxter algebras.
By Lemma~\mref{lem:pref}.(\mref{it:hom}), all the outer trapezoids in the above diagram are commutative:
\begin{equation}
 j_i\circ f_i=\sha(f_i) \circ j_0, \quad
    j_{1,2}\circ h_i = \sha(h_i)\circ j_i, \quad i=1, 2.
 \mlabel{eq:outer}
\end{equation}


Let $\cali'$ be the Rota-Baxter ideal of $\sha(R_1\ot_{R_0}R_2)$ generated by
$\sha(h_1)(\ker \phi_1) \cup \sha(h_2)(\ker \phi_2).$
\begin{lemma}
$\cali'$ equals to the Rota-Baxter ideal $\cali$ defined in Theorem~\mref{thm:ten}.
\mlabel{lem:ideal}
\end{lemma}
\begin{proof}
Since $\{P_i(r_i)-1\ot r_i\,|\,r_i\in R_i\}$ is the generating set of the ideal $\cali_i$ of $R_i$ for $i=1, 2$, we find that $\sha(h_i)(\{P_i(r_i)-1\ot r_i\,|\,r_i\in R_i\})$ is the generating set of the ideal of $\sha(R_1\ot_{R_0}R_2)$ generated by $\sha(h_i)(\ker\phi_i).$ Since $h_1(r_1)=r_1\Box 1$ with the notation $\Box$ introduced before Theorem~\mref{thm:ten}, by Lemma~\mref{lem:pref}.(\mref{it:hom}) we have
$$\sha(h_1)(P_1(r_1)-1\ot r_1)=h_1(P_1(r_1)) - h_1(1)\ot h_1(r_1)=P_1(r_1)\Box 1 - (1\Box 1)\ot (r_1\Box 1)$$
and similarly
$$\sha(h_2)(P_2(r_2)-1\ot r_2)=1\Box P_2(r_2)-(1\Box 1)\ot (1\Box r_2).$$
Hence the lemma follows.
\end{proof}

Let $\phi_{1,2}:\sha(R_1\ot_{R_0}R_2)\to
\sha(R_1\ot_{R_0}R_2)/\cali$ be the quotient Rota-Baxter algebra homomorphism.
Then by the definition of $\cali=\cali'$, there are unique
$k_i:(R_i,P_i)\to (\sha(R_1\ot_{R_0}R_2)/\cali, \widetilde{P_{R_1\ot_{R_0}R_2}}), i=1,2,$
such that
\begin{equation}
k_i\circ \phi_i=\phi_{1,2}\circ \sha(h_i),\quad i=1, 2.
\mlabel{eq:kf}
\end{equation}

We now show that $(\sha(R_1\ot_{R_0}R_2)/\cali, \widetilde{P_{R_1\ot_{R_0}R_2}})$, together with the homomorphisms of Rota-Baxter algebras $k_i:R_i\to \sha(R_1\ot_{R_0}R_2), i=1, 2$, is the tensor product Rota-Baxter algebra of $(R_1,P_1)$ and $(R_2,P_2)$ over $(R_0,P_0)$. We achieve this by proving the following two lemmas. \begin{lemma}
$k_1\circ f_1=k_2\circ f_2. $
\mlabel{lem:kf}
\end{lemma}
\begin{proof}
By the surjectivity of $\phi_0$, we only need to prove
\begin{equation}
 k_1\circ f_1\circ \phi_0 = k_2\circ f_2 \circ \phi_0.
\mlabel{eq:kff}
\end{equation}

By the functorality of the functor $\sha$ from the category of commutative algebras to the category of commutative Rota-Baxter algebras, we obtain
\begin{equation}
 \sha(h_1)\circ \sha(f_1)=\sha(h_2)\circ \sha(f_2).
\mlabel{eq:shfh}
\end{equation}
Further, by Eq.~(\mref{eq:id}) we trivially have
$$f_i \circ \phi_0\circ j_0 = \phi_i\circ j_i \circ f_i,
\quad i=1,2.$$
By Eq.~(\mref{eq:outer}), we have
$$ f_i \circ \phi_0\circ j_0 = \phi_i\circ \sha(f_i) \circ j_0,
\quad i=1,2.$$
Thus by the freeness of $\sha(R_0)$, we have
\begin{equation}
f_i \circ \phi_0 = \phi_i\circ \sha(f_i), \quad i=1,2.
\mlabel{eq:ff}
\end{equation}
Combining equations (\mref{eq:kf}), (\mref{eq:shfh}) and (\mref{eq:ff}) we obtain
\begin{eqnarray*}
k_1\circ f_1 \circ \phi_0 &=& k_1\circ \phi_1 \circ \sha(f_1)\\
&=& \phi_{1,2}\circ \sha(h_1)\circ \sha(f_1)\\
&=& \phi_{1,2}\circ \sha(h_2)\circ \sha(f_2)\\
&=& k_2\circ \phi_2\circ \sha(f_2)\\
&=& k_2\circ f_2\circ \phi_0,
\end{eqnarray*}
as needed.
\end{proof}

\begin{lemma}
Let $(R,P)$ be any $(R_0,P_0)$-algebra and let $\psi_i: (R_i,P_i)\to (R,P), i=1,2$ be such that
$\psi_1\circ f_1=\psi_2\circ f_2$. There is unique Rota-Baxter homomorphism $\psi_{1,2}:\sha(R_1\ot_{R_0}R_2)/\cali \to R$ such that \begin{equation}
\psi_i= \psi_{1,2}\circ k_i,\quad i=1,2.
\mlabel{eq:pek}
\end{equation}
\mlabel{lem:pek}
\end{lemma}
\begin{proof}
From $\psi_1\circ f_1=\psi_2\circ f_2$ and Eq.~(\mref{eq:id}), we have

$$\psi_1\circ \phi_1\circ j_1\circ f_1=\psi_2\circ \phi_2\circ j_2 \circ f_2.$$
Thus by the universal property of the tensor product $R_1\ot_{R_0}R_2$, there is unique $\eta:R_1\ot_{R_0}R_2 \to R$ such that
\begin{equation}
\psi_i\circ \phi_i\circ j_{R_i}= \eta\circ h_i, i=1,2.
\mlabel{eq:ppj}
\end{equation}

By the universal property of the free Rota-Baxter algebra $(\sha(R_1\ot_{R_0}R_2),P_{R_1\ot_{R_0}R_2})$, there is unique
$\free{\eta}:(\sha(R_1\ot_{R_0}R_2),P_{R_1\ot_{R_0}R_2})
\to (R,P)$ such that
$\free{\eta}\circ j_{1,2}=\eta$. Combining with Eq.~(\mref{eq:ppj}) we get
$$\psi_i\circ \phi_i\circ j_i=\free{\eta}\circ j_{1,2}\circ h_i
=\free{\eta}\circ \sha(h_i)\circ j_i, \quad i=1,2.$$
Thus
$$\psi_i\circ \phi_i= \free{\eta}\circ \sha(h_i),\quad  i=1,2,$$
by the freeness of $\sha(R_1)$ and $\sha(R_2)$.

Further,
$$\free{\eta}(\sha(h_i)(\ker \phi_i))=(\free{\eta}\circ \sha(h_i))(\ker \phi_i)=(\psi_i\circ \phi_i)(\ker \phi_i)=0, \quad i=1,2.$$
Thus by the construction of $\cali$, there is unique $\tilde{\free{\eta}}: (\sha(R_1\ot_{R_0}R_2)/\cali, \widetilde{P_{R_1\ot_{R_0}R_2}}) \to (R,P)$ such that
$\free{\eta}=\tilde{\free{\eta}}\circ \phi_{1,2}.$
Hence
$$\psi_i\circ \phi_i=\free{\eta}\circ \sha(h_i)=\tilde{\free{\eta}}\circ \phi_{1,2}\circ \sha(h_i) =\tilde{\free{\eta}}\circ k_i\circ \phi_i, \quad i=1,2.$$
Then by the surjectivity of $\phi_i$, we get

$$\psi_i= \tilde{\free{\eta}}\circ k_i,\quad i=1,2.$$
So we just need to take $\psi_{1,2}=\tilde{\free{\eta}}$ for the existence of $\psi_{1,2}$.

To prove the uniqueness of $\psi_{1,2}$, suppose $\psi_{1,2}':(\sha(R_1\ot_{R_0}R_2)/\cali, \widetilde{P_{R_1\ot_{R_0}R_2}}) \to (R,P)$ such that
$\psi_i= \psi_{1,2}' \circ k_i,\quad i=1,2.$. Then tracing the above argument back, we obtain
$$ \psi=\psi_i\circ \phi_i\circ j_i=\psi_{1,2}'\circ \phi_{1,2}\circ j_{1,2}\circ h_i, \quad i=1,2.$$
By the uniqueness in the universal property of the tensor product $R_1\ot_{R_0}R_2$ of commutative algebras, we get
$$\psi_{1,2}'\circ \phi_{1,2}\circ j_{1,2} =\eta.$$
Then by the freeness of $\sha(A_1\ot_{A_0}A_2)$, we obtain
$$ \psi_{1,2}'\circ \phi_{1,2} = \free{\eta}.$$
Since we also have $\free{\eta}=\tilde{\free{\eta}}\circ \phi_{1,2}$, by the surjectivity of $\phi_{1,2}$, we obtain $\phi_{1,2}'\circ \phi_{1,2}=\tilde{\free{\eta}},$ proving the uniqueness of $\psi_{1,2}$.
\end{proof}

The proof of Theorem~\mref{thm:ten} is obtained by combining Lemma~\mref{lem:ideal}, \mref{lem:kf} and \mref{lem:pek}.
\end{proof}

\subsection{Rota-Baxter Zariski topology}
We end this paper by discussing a possible way to define the Zariski topology on the category of commutative Rota-Baxter algebras. The Zariski topology we want to define is a {\it Grothendieck topology} for which we now briefly recall the definition. Let $\mathfrak{C}$ be a category.   A Grothendieck topology on $\mathfrak{C}$ is a system of distinguished families of maps $\{U_i \to U\}_{i\in I}$ in $\mathfrak{C}$, called coverings,  such that
\begin{enumerate} \item[(a)] for any coverings $\{U_i \to U\}_{i\in I}$ and any morphism $V\to U$ in $\mathfrak{C}$, the fiber product  $U_i \underset{U}{\times}V$ exists for any $i$ and $\{U_i \underset{U}{\times}V \}_{i\in I}\to V$ is a covering of $V$;
\item[(b)] if $\{U_i \to U\}_{i\in I}$ is a covering of $U$ and, for each $i\in I$, $\{V_{ij}\to U_i\}_{j\in J_i}$ is a covering of $U_i$, then $\{V_{ij}\to U\}_{j\in J_i, i\in I}$ is covering, where $V_{ij}\to U$  is the obvious composition;
\item[(c)] for any $U\in \mathfrak{C}$, the family $\{U\stackrel{=}{\to}U\}$ consisting only the identity map is a covering.
\end{enumerate}
A {\it site} is a category $\mathfrak{C}$ together with a Grothendieck topology defined on $\mathfrak{C}$.

Next, we define the Zariski open coverings in $\CRB/\bfk$.
\begin{defn} {\rm
A morphism $f:(A, P_A)\to (B, P_B)$ in $\CRB/\bfk$ is called principle open if there exists a multiplicative subset $S\subseteq A$ and an isomorphism  $\tau $ such that there is a commutative diagram in $\CRB/\bfk$
$$\xymatrix{(A, P_A) \ar[r]^f \ar[dr]^{i} & (B, P_B) \ar[d]^{\tau}_{\cong}\\
 &(S^{-1}_{RB}A, S^{-1}P_A)
}$$ where $i$ is the natural map from $(A, P_A)$ to its localization at $S$. A morphism $g:(A, P_A)\to (B, P_B)$ is called open if $g$ is the composition of finitely many principle open morphisms. A family of maps $\{(A, P_A) \to (B_i, P_{B_i})\}_{i\in I}$ is called a Rota-Baxter Zariski covering of $(A, P_A)$ if there exists a multiplicative subset $S\in A$ such that
\begin{enumerate}\item[(1)] the Rota-Baxter ideal generated by $S$ is $A$;
\item[(2)] for any $s\in S$, there exists $i\in I$ such that the image of $s$ under $(A, P_A)\to (B_i, P_{B_i})$ is invertible.
\end{enumerate}
}
\end{defn}

Before stating the main result, we give the compatibility between localization and tensor product for Rota-Baxter algebras.

\begin{lemma} \label{TensorAndLocalization} Let $f:(A, P_A) \to (B, P_B)$ be a morphism in $\CRB/\bfk$ and $S$ be a multiplicative subset in $A$. Let $T$ be the multiplicative subset $f(S)$. Then there is a natural isomorphism
$$(S^{-1}_{RB}A \rbot_A B, S^{-1}P_A \ot_{P_A} P_B) \stackrel{\cong}{\longrightarrow} (T^{-1}_{RB}B, T^{-1}P_B).$$
\end{lemma}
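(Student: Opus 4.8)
The plan is to prove the isomorphism purely formally, by showing that both sides co-represent the same functor on $\CRB/\bfk$. First I would recall that, by definition, the left-hand side $S^{-1}_{RB}A \rbot_A B$ is the colimit (pushout) in $\CRB/\bfk$ of the diagram
$$ (S^{-1}_{RB}A, S^{-1}P_A) \xleftarrow{\ i\ } (A, P_A) \xrightarrow{\ f\ } (B, P_B), $$
where $i$ is the localization map of $A$ at $S$; its existence is guaranteed by Theorem~\mref{thm:ten}. By the universal property established there, a Rota-Baxter homomorphism out of the pushout into an arbitrary $(R, P_R)$ is exactly a pair $(\psi_1, \psi_2)$ with $\psi_1 : S^{-1}_{RB}A \to R$, $\psi_2 : B \to R$ and $\psi_1 \circ i = \psi_2 \circ f$.

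The decisive reduction is to use the universal property of $S^{-1}_{RB}A$ (Definition~\mref{de:local}) to eliminate $\psi_1$: giving $\psi_1$ is the same as giving a Rota-Baxter morphism $A \to R$ that sends $S$ to invertibles, and the compatibility $\psi_1 \circ i = \psi_2 \circ f$ forces that morphism to be $\psi_2 \circ f$. Hence a morphism out of the pushout is the same datum as a single morphism $\psi_2 : B \to R$ subject to the condition that $(\psi_2 \circ f)(S) = \psi_2(T)$ consists of invertible elements — which is precisely the universal property characterizing $T^{-1}_{RB}B$. This is exactly where the hypothesis $T = f(S)$ enters.

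Concretely, rather than merely invoke Yoneda I would exhibit the two mutually inverse maps. Write $j_T : B \to T^{-1}_{RB}B$ for the localization map at $T$. In one direction, $j_T \circ f : A \to T^{-1}_{RB}B$ sends $S$ into the invertible set $j_T(T)$, so it factors uniquely through $i$ to give $\psi_1 : S^{-1}_{RB}A \to T^{-1}_{RB}B$ with $\psi_1 \circ i = j_T \circ f$; together with $\psi_2 := j_T$ this produces $\Psi : S^{-1}_{RB}A \rbot_A B \to T^{-1}_{RB}B$. In the other direction, the structure map $k_2 : B \to S^{-1}_{RB}A \rbot_A B$ satisfies $k_2(T) = k_2(f(S)) = k_1(i(S))$ by the pushout relation $k_1 \circ i = k_2 \circ f$ (Lemma~\mref{lem:kf}), and $i(S)$ is invertible in $S^{-1}_{RB}A$, so $k_2(T)$ is invertible; thus $k_2$ factors uniquely through $j_T$, yielding $\Phi : T^{-1}_{RB}B \to S^{-1}_{RB}A \rbot_A B$.

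Finally I would check $\Phi \circ \Psi = \id$ and $\Psi \circ \Phi = \id$. Both equalities follow from the uniqueness clauses in the two universal properties: each composite is a self-map respecting the defining data (the localization map, resp. the two structure maps), hence must be the identity. The construction is visibly canonical in the pair $(f, S)$, which gives the asserted naturality. I do not expect a genuine obstacle here; the only real care needed is the bookkeeping that keeps the two distinct universal properties — of the pushout and of the localization — straight, together with the verification that the invertibility condition transfers correctly across the identity $T = f(S)$.
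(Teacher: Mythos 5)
Your proposal is correct and follows essentially the same route as the paper's proof: the paper likewise builds the map $g$ (your $\Psi$) from the pushout's universal property applied to the pair $(k, i_B)$ with $k$ induced by $i_B\circ f$, builds $h$ (your $\Phi$) from the localization property of $T^{-1}_{RB}B$ applied to $j_1 = k_2$, and verifies both composites are identities via the uniqueness clauses. Your opening Yoneda-style observation that both sides co-represent the same functor is a clean summary of the same argument rather than a different method.
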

\begin{proof} The proof is a routine exercise in category theory. We start by explaining that there is a commutative diagram in $\CRB/\bfk$
$$\xymatrix{
(A, P_A) \ar[r]^-{i_A} \ar[d]^-{f}   & (S^{-1}_{RB}A, S^{-1}P_A) \ar[d]^{j_2} \ar@/^1pc/[ddr]^{k} \\
(B, P_B) \ar[r]^-{j_1} \ar@/_1pc/[drr]_{i_B}                 & (S^{-1}_{RB}A \rbot_{A} B, S^{-1}P_A \ot_{P_A}  P_B)  \ar@{-->}[dr]^-g \\
                                       &       & (T^{-1}_{RB}B, T^{-1}P_B)  \ar@<1ex>@{-->}[ul]^-h  .
}$$
Morphisms  $i_A$ and $i_B$ are the  natural map into their localizations. The upper left square is the push-out square defining the tensor product $(S^{-1}_{RB}A \rbot_{A} B, S^{-1}P_A \ot_{P_A}  P_B)$. $k$ is induced from the map $i_B\circ f$. $g$ is the map induced by $k$ and $i_B$ since $k\circ i_A=i_B\circ f$. Since $j_1(T)=j_1(f(S)) =j_2(i_A(S))$ and $i_A(S)$ is a set of invertible elements, $j_1(T)$ is a set of invertible elements. So $j_1$ induces a morphism $h$.

Since $g\circ h\circ i_B = g\circ j_1 =i_B$,  the map $g\circ h$ is the identity map by the universal property of localizations. It is easy to see that $h\circ g \circ j_1 = h\circ i_B= j_1$.  We also have $$h\circ g \circ j_2\circ i_A = h\circ g\circ j_1\circ f = h\circ i_B \circ f =j_1\circ f=j_2\circ i_A,$$
which implies by the universal property of localizations that $h\circ g \circ j_2=j_2 $. So $h\circ g$ is the identity map by the universal property of tensor products. So $g$ and $h$ are isomorphisms as desired.
\end{proof}

\begin{theorem} The collection of Rota-Baxter Zariski coverings forms a Grothendieck topology on $(\CRB/\bfk)^{op}$, the opposite category of $\CRB/\bfk$.
\end{theorem}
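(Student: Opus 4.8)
The plan is to verify directly the three defining axioms (a), (b), (c) of a Grothendieck topology for the family of Rota-Baxter Zariski coverings on $\calc:=(\CRB/\bfk)^{op}$. The first thing to record is the translation of the categorical data: a covering $\{U_i\to U\}$ in $\calc$ is exactly a family $\{(A,P_A)\to (B_i,P_{B_i})\}$ in $\CRB/\bfk$, and a fiber product $U_i\times_U V$ in $\calc$ is exactly a pushout, i.e. the Rota-Baxter tensor product $B_i\rbot_A C$ in $\CRB/\bfk$, whose existence is guaranteed by Theorem~\ref{thm:ten}. Throughout I will use two elementary stability facts about a morphism $g\colon (A,P_A)\to (C,P_C)$ in $\CRB/\bfk$: it carries the Rota-Baxter ideal generated by a set $S$ into the Rota-Baxter ideal generated by $g(S)$, and it sends invertible elements to invertible elements.

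Axioms (c) and (b) are the routine ones. For (c), the singleton family $\{(A,P_A)\stackrel{=}{\to}(A,P_A)\}$ is a covering witnessed by $S=\{1_A\}$: the Rota-Baxter ideal generated by $1_A$ is $A$, and $1_A$ is invertible. For (b), let $\{(A,P_A)\to(B_i,P_{B_i})\}_{i}$ be a covering witnessed by $S\subseteq A$, and for each $i$ let $\{(B_i,P_{B_i})\to (C_{ij},P_{C_{ij}})\}_{j\in J_i}$ be a covering witnessed by $S_i\subseteq B_i$. I claim the composite family is a covering witnessed again by $S$. Condition (1) is immediate. For condition (2), given $s\in S$ pick $i$ with $s$ invertible in $B_i$; since the Rota-Baxter ideal generated by $S_i$ equals $B_i$, the set $S_i$, and hence the index set $J_i$, is nonempty whenever $B_i\neq 0$, so choosing any $j\in J_i$ the image of $s$ in $C_{ij}$ stays invertible. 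Thus $S$ witnesses the composite covering.

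The substance of the proof is axiom (a). Let $\{(A,P_A)\to (B_i,P_{B_i})\}_i$ be a covering of $(A,P_A)$ witnessed by $S\subseteq A$, and let $g\colon (A,P_A)\to (C,P_C)$ be an arbitrary morphism, i.e. a map $V\to U$ in $\calc$. First, the required fiber products $U_i\times_U V$ exist, being the Rota-Baxter tensor products $B_i\rbot_A C$ of Theorem~\ref{thm:ten}; denote by $k\colon (C,P_C)\to (B_i\rbot_A C)$ the canonical structure map. I then claim that $\{(C,P_C)\to (B_i\rbot_A C)\}_i$ is a covering of $(C,P_C)$ witnessed by $T:=g(S)$. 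Condition (1) holds because $1_A$ lies in the Rota-Baxter ideal generated by $S$, so $1_C=g(1_A)$ lies in the Rota-Baxter ideal generated by $T$, which is therefore all of $C$. For condition (2), given $t=g(s)\in T$ choose $i$ with $s$ invertible in $B_i$; in the pushout square the image of $t$ under $C\to B_i\rbot_A C$ coincides with the image of $s$ under $A\to B_i\to B_i\rbot_A C$, which is invertible because $s$ is invertible in $B_i$. Hence $t$ is invertible in $B_i\rbot_A C$, and $T$ witnesses the base-changed family.

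The main obstacle is exactly this base-change step, and it is where Lemma~\ref{TensorAndLocalization} does the real work: when the covering map $(A,P_A)\to (B_i,P_{B_i})$ is a principal open, so that $B_i\cong (S_i)^{-1}_{RB}A$ for a multiplicative set $S_i\subseteq A$, the lemma identifies the fiber product $(S_i)^{-1}_{RB}A\rbot_A C$ with the localization $(g(S_i))^{-1}_{RB}C$, so that the base change of a principal open covering is again a principal open covering and the structure maps $k$ remain of the expected form. I would therefore spend most of the write-up making the identifications in axiom (a) precise and checking the compatibility of the witnessing set $T=g(S)$ with Lemma~\ref{TensorAndLocalization}; the transitivity and identity axioms, together with the bookkeeping of nonempty index sets, then require only the two elementary stability facts stated above.
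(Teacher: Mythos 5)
Your proof is correct, and for axioms (b) and (c) it matches the paper's (much terser) argument: identity coverings are witnessed by $\{1_A\}$, and composites preserve invertibility. Where you genuinely diverge is axiom (a). The paper reduces (a) to the statement that \emph{principal open morphisms are stable under cobase change} and cites Lemma~\ref{TensorAndLocalization} for that; it never spells out why the base-changed family satisfies conditions (1) and (2) of the covering definition. You instead verify those conditions directly for the family $\{(C,P_C)\to B_i\rbot_A C\}_i$ with witness $T=g(S)$, using only the existence of the tensor product (Theorem~\ref{thm:ten}), the commutativity of the pushout square, and the fact that a Rota--Baxter morphism carries the Rota--Baxter ideal generated by $S$ into the one generated by $g(S)$ (which holds because the preimage of a Rota--Baxter ideal is a Rota--Baxter ideal). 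Since the paper's definition of a Rota--Baxter Zariski covering is stated purely in terms of a witnessing multiplicative set and does not literally require the maps to be open, your direct check is sufficient on its own, and Lemma~\ref{TensorAndLocalization} enters only as the refinement you describe --- identifying $B_i\rbot_A C$ with a localization of $C$ when the covering maps are principal opens. This buys a proof that is self-contained at the level of the stated definition, at the cost of a little more bookkeeping; the paper's route buys brevity but leans on an implicit identification of coverings with families of (principal) open maps. One small loose end in your axiom (b): if some $B_i$ is the zero ring, condition (1) for the covering of $B_i$ no longer forces $J_i$ to be nonempty, so the witness $S$ may fail condition (2) for the composite family; this degenerate case is ignored by the paper as well and is a definitional quibble rather than a substantive gap.
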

\begin{proof} We check that this collection of coverings satisfies $(a), (b)$ and $(c)$ in the definition of Grothendieck topologies. Requirement $(c)$ is trivial because $(A, P_A)$ is the localization of itself at $1$. Let $f: (A, P_A)\to (B, P_B)$ and $g:(B, P_B)\to (C, P_C)$ be open morphisms. It follows from the definition that $g\circ f$ is again open. If $s\in A$ is an element such that $f(s)$ is invertible in $B$, then $g(f(s))$ is invertible in $C$. This shows that requirement $(b)$ is satisfied. Fiber product in $(\CRB/\bfk)^{op}$ is the tensor product in $\CRB/\bfk$ which always exists by Theorem~\mref{thm:ten}. To check requirement $(a)$, it suffices to check that principle open morphisms are closed under cobase changes in $\CRB/\bfk$, which is true by Lemma \ref{TensorAndLocalization}.
\end{proof}

The Grothendieck topology defined by the  Rota-Baxter Zariski coverings on $(\CRB/\bfk)^{op}$ is called the {\bf Rota-Baxter Zariski topology} and we denote the corresponding site by $RBZar/\bfk$.
%
A sheaf theory on this site is currently unavailable. It would be interesting to check whether the category of sheaves on this site
is an abelian category with enough injective objects. This property would enable us to discuss
Rota-Baxter sheave cohomology on commutative Rota-Baxter algebras.




\end{document}